\newtheorem{theorem}{Theorem}[section]
\newtheorem{lm}[theorem]{Lemma}
\newtheorem{thm}[theorem]{Theorem}
\newtheorem{prop}[theorem]{Proposition}
\newcommand{\abs}[1]{\left\lvert #1 \right\rvert}
\def\F{\mathbb{F}}
\def\E{\mathcal{E}}
\def\R{\mathbb{R}}
\def\Z{\mathbb{Z}}
\def\N{\mathbb{N}}
\newcommand{\pft}[1]{\left\{ \theta  #1 \right\}}
\newcommand{\nor}[1]{\left\lVert #1 \right\rVert}
\def\T{\mathbb{T}}
\def\Q{\mathbb{Q}}
\def\D{\mathcal{D}}
\newcommand\dens[1]{\mathrm{d}\big(#1\big)}
\newcommand\ldens[1]{\underline{\mathrm{d}}\big(#1\big)}
\def\lmu{\underline{\mu}}
\theoremstyle{definition}
\newtheorem{ex}[theorem]{Example}
\title[On the density or measure of sets and their sumsets]{On the density or measure of sets and their sumsets in the integers or the circle}
\author[P.-Y. Bienvenu]{Pierre-Yves Bienvenu}
\address{P.-Y. Bienvenu, Univ Lyon,  CNRS, ICJ UMR 5208, 
69622 Villeurbanne cedex, France}
\email{pbienvenu@math.univ-lyon1.fr}
\author[F. Hennecart]{Fran\c cois Hennecart}
\address{F. Hennecart, Univ Lyon, UJM-Saint-\'Etienne, CNRS, ICJ UMR 5208, 42023 Saint-\'Etienne, France}
\email{francois.hennecart@univ-st-etienne.fr}
\thanks{This work was performed within the framework of the LABEX MILYON (ANR-10-LABX-0070) of Universit\'e de Lyon, within the program ``Investissements d'Avenir" (ANR-11-IDEX-0007) operated by the French National Research Agency (ANR)}
\subjclass[2010]{11B05, 11B13}
\begin{document}
\begin{abstract}
Let $\dens{A}$ be the asymptotic density (if it exists) of a sequence of integers $A$.
For any real numbers $0\leq\alpha\leq\beta\leq 1$, we solve the question of the existence of a sequence $A$ 
of positive integers such that $\dens{A}=\alpha$ and $\dens{A+A}=\beta$.
More generally we study the set of $k$-tuples
$(\dens{iA})_{1\leq i\leq k}$ for $A\subset \Z$.
This leads us to introduce subsets defined
by diophantine constraints inside a
random set of integers known as the set of ``pseudo $s$th powers''.
We consider similar problems for subsets of the circle
$\R/\Z$, that is,
we partially determine
the set of $k$-tuples 
$(\mu(iA))_{1\leq i\leq k}$ for $A\subset \R/\Z$.
\end{abstract}
\maketitle

\section{\bf Introduction}

For $A\subset\mathbb{N}$ and $t>1$, we let $A(t)=|A\cap[1,t]|$. We define if it exists the so-called 
\emph{asymptotic density}
of $A$ by
$$
\dens{A}=\lim_{t\to\infty}\frac{A(t)}{t}.
$$ 
Otherwise we define the \emph{lower} and the \emph{upper} asymptotic densities $\underline{\mathrm{d}}(A)$ and
$\overline{\mathrm{d}}(A)$ using $\liminf$ and
$\limsup$ instead of limits.
More generally, if $A\subset B\subset \N$, we
define if it exists the density of $A$ inside $B$
as
$$
\mathrm{d}_B(A)=\lim_{t\to\infty}\frac{A(t)}{B(t)}.
$$
The density of $A$ inside $\N$ is therefore simply
the density, and
if $B$ has a density, we have
$\mathrm{d}_B(A)=\dens{A}/\dens{B}$.

\medskip
For a subset $A$ of a semigroup $G$, let $A+A=\{a+b : a,b\in A\}$.
For $k\geq 1$, we denote by $kA$ its
$k$-fold sumset.
From Kneser's Theorem \cite{Kn}, we know that for
subsets $A\subset \N$, the inequality $\ldens{A+A}<2\ldens{A}$ may only hold when
$\ldens{A+A}$ is a rational number. 
Similarly, for any subset $A$ of the circle  
$\T=\R/\Z$ equipped with its Haar probability measure $\mu$, 
a theorem of Raikov \cite{Ra} implies that $\lmu(2A)\geq \min(1,2\lmu(A))$ where 
$\lmu(A)=\sup_{\substack{F\subset A\\F\text{ closed}}}\mu(F)$.

In this paper, we determine the possible values 
$(\alpha,\beta)$ of pairs $(\ldens{A},\ldens{2A})$ and $(\lmu(A),\lmu(2A))$. We first completely settle the case $\beta \geq \min (1,2\alpha)$.

\begin{theorem}
\label{deuxDensites}
Let $(\alpha,\beta)\in [0,1]^2$. Suppose $\beta\geq \min(2\alpha,1)$. Then the following statements both hold.
\begin{enumerate}
\item[\rm a)] There exists $A\subset\N$ such that $\dens{A}$ and $\dens{2A}$ exist and equal $\alpha$ and $\beta$ respectively.
\item[\rm b)] There exists a measurable subset $A\subset \T$ such that $2A$ is measurable and
$\mu(A)=\alpha$ and $\mu(2A)=\beta$.
Further, for $\alpha >0$, we may take $A$ to be open (in fact a finite union of open intervals).
\end{enumerate}
\end{theorem}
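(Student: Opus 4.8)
The plan is to prove part (b) first and then deduce part (a) for $\alpha>0$ from it by transferring the circle construction to $\Z$ along a Bohr set, handling $\alpha=0$ directly in each setting. Several cases of (b) are immediate. If $\alpha>1/2$ the hypothesis forces $\beta=1$, and $A=(0,\alpha)$ works, since $2A=(0,2\alpha)$ equals $\T$ in $\R/\Z$; in fact $\mu(A)>1/2$ already forces $A+A=\T$ because $A\cap(x-A)\neq\emptyset$ for every $x$. If $\alpha\le 1/2$ the admissible range is $\beta\in[2\alpha,1]$, and its left endpoint $\beta=2\alpha$ is handled by the single interval $A=(0,\alpha)$, for which $\mu(2A)=2\alpha$. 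If $\alpha=0$, one uses a scaled copy of the middle‑thirds Cantor set $C$: it is compact of measure zero and $C+C=[0,2]$, so $\tfrac\beta2 C$ is null and its doubling is $[0,\beta]$, of measure $\beta$ (this $A$ is not open, consistently with the statement).

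The remaining work in (b) is the right endpoint $\beta=1$ for small $\alpha>0$ and the intermediate values. For $\beta=1$ I would take a finite union of equally long arcs along a rational lattice: fix an integer $N\asymp\alpha^{-2}$, choose $S\subseteq\Z/N\Z$ with $S+S=\Z/N\Z$ and $\lvert S\rvert\asymp\sqrt N$ (for instance $S=\{0,\dots,K-1\}\cup\{0,K,\dots,(K-1)K\}$ with $K=\lceil\sqrt N\rceil$, padded to the desired cardinality), and set $A=\bigcup_{s\in S}(s/N,\,s/N+\ell)$ with $\ell$ chosen so that $\mu(A)=\lvert S\rvert\ell=\alpha$; then
\[
2A=\bigcup_{t\in S+S}\bigl(\tfrac tN,\tfrac tN+2\ell\bigr)=\bigcup_{t=0}^{N-1}\bigl(\tfrac tN,\tfrac tN+2\ell\bigr),
\]
which has measure $1$ as soon as $2\ell\ge 1/N$; for $N\asymp\alpha^{-2}$ the constraints $\lvert S\rvert\ell=\alpha$ and $1/(2N)\le\ell<1/N$ (the latter also ensuring the arcs of $A$ are disjoint) are simultaneously met, after a harmless rounding adjustment. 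For intermediate $\beta\in(2\alpha,1)$ I would interpolate. When $\alpha>1/4$ the one‑parameter family $A_d=\bigcup_{j=0}^{m-1}(jd,jd+\alpha/m)$ of $m$ equally spaced arcs with spacing $d$ already suffices: $\mu(2A_d)$ depends continuously on $d$, equals $2\alpha$ when $d=\alpha/m$ and reaches $1$ once the $2m-1$ sum‑arcs just cover $\T$, so the intermediate value theorem realizes every $\beta\in[2\alpha,1]$. For smaller $\alpha$ one deforms the interval $(0,\alpha)$ into the $\beta=1$ configuration through a path of finite unions of pairwise disjoint arcs of total measure $\alpha$ (the set of such configurations is path‑connected), and continuity of $\mu(2A)$ in the endpoints again yields all intermediate $\beta$.

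Now part (a) with $\alpha>0$. Let $E\subset\T$ be the finite union of open intervals furnished by (b), with $\mu(E)=\alpha$ and $\mu(E+E)=\beta$, fix any irrational $\theta$, and put $A=\{n\ge 1:\{n\theta\}\in E\}$. Weyl equidistribution gives $\dens{A}=\mu(E)=\alpha$. If $n=a+a'$ with $a,a'\in A$ then $\{n\theta\}=\{a\theta\}\oplus\{a'\theta\}\in E+E$, so $A+A\subseteq\{n:\{n\theta\}\in E+E\}$, a set of density $\mu(E+E)=\beta$ since $E+E$ is a finite union of intervals; hence $\overline{\mathrm d}(2A)\le\beta$. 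Conversely, fix $\epsilon>0$: the function $x\mapsto\mu\bigl(E\cap(x-E)\bigr)$ is continuous and positive exactly on $E+E$, hence bounded below by some $\delta(\epsilon)>0$ on the compact set of points of $\overline{E+E}$ at distance $\ge\epsilon$ from $\partial(E+E)$. For such an $x=\{n\theta\}$, the number of $a\in[1,n-1]$ with $\{a\theta\}\in E\cap(\{n\theta\}-E)$ — equivalently with $a\in A$ and $n-a\in A$ — is $\ge\delta(\epsilon)n-o(n)>0$ for $n$ large, so $n\in 2A$. Letting $\epsilon\to 0$ gives $\ldens{2A}\ge\beta$, hence $\dens{2A}=\beta$. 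Finally, for (a) with $\alpha=0$: if $\beta=1$ take $A$ a classical thin additive basis of order $2$; for general $\beta\in(0,1]$, intersect a random ``pseudo‑square''‑type basis $H$ (with $\dens{H}=0$ almost surely) with the Bohr condition $\{n\theta\}\in[0,\beta/2)$, and use a second‑moment estimate on the representation function to show almost every $n$ with $\{n\theta\}\in[0,\beta)$ is represented — this is where the probabilistic sets and diophantine constraints announced in the introduction enter.

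I expect the principal obstacle to be the core construction in (b): manufacturing, for arbitrarily small $\alpha>0$, a finite union of \emph{open} intervals of total measure exactly $\alpha$ whose doubling has full measure, and then threading the two extreme constructions into a continuous family realizing every intermediate $\beta$ while holding $\mu(A)$ fixed. The exactness of the prescribed values $(\alpha,\beta)$ and the requirement that $A$ be open, rather than merely measurable, are what force the careful lattice/deformation bookkeeping.
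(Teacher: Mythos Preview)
Your treatment of part (a) and of the $\alpha=0$ case of (b) matches the paper closely: the paper also deduces (a) for $\alpha>0$ from (b) via the Bohr transfer $n\mapsto\{n\theta\}$ (this is its Lemma~\ref{OpenSum}), and it handles $\alpha=0$ in (b) with the scaled Cantor set and in (a) with exactly the probabilistic ``pseudo-squares inside a Bohr set'' construction you sketch. One point worth tightening in your transfer argument: the error term in ``$\ge\delta(\epsilon)n-o(n)$'' must be \emph{uniform} in $x=\{n\theta\}$, since the target set $E\cap(x-E)$ varies with $n$; the paper secures this via the Erd\H{o}s--Tur\'an inequality (its bound~\eqref{eq:Weyl2}).

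Where you genuinely diverge from the paper is part (b) for $0<\alpha<1/2$ and $\beta>2\alpha$. The paper gives a single explicit family that hits every such pair directly: with $k=\lfloor\beta/\alpha\rfloor\ge 2$ it takes
\[
A=(0,x)\cup\bigl(\{2x,\dots,kx\}+(-\epsilon,0)\bigr),
\]
computes $\mu(A)=x+(k-1)\epsilon$ and $\mu(2A)=(k+1)x+2(k-1)\epsilon$, and observes that the ratio $\mu(2A)/\mu(A)$ is a continuous function of $\epsilon/x$ running from $k+1$ down to $4k/(k+1)\le k$; one then solves for $x$ and $\epsilon$. This replaces both your lattice construction for $\beta=1$ and your interpolation step, and it automatically produces an open set with a bounded number of components.

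Your alternative route (build the extremal $\beta=1$ case from a basis in $\Z/N\Z$, then connect it to the interval $(0,\alpha)$ by a path and apply the intermediate value theorem to $\mu(2A)$) is in principle sound, but the step you yourself flag is genuinely incomplete as written: for small $\alpha$ you need a \emph{concrete} path in the space of finite unions of open arcs of total measure $\alpha$, along which $\mu(2A)$ varies continuously, linking the one-interval configuration to your $\beta=1$ configuration. Path-connectedness of that configuration space and continuity of $A\mapsto\mu(2A)$ (for a fixed upper bound on the number of components) are both true and not hard, but they must be stated and checked; your ``harmless rounding adjustment'' in the $\beta=1$ construction also needs to be made explicit to guarantee $\mu(A)=\alpha$ exactly. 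The paper's construction sidesteps all of this bookkeeping.
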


The case $\beta=2\alpha$ is obvious for the second item (with an interval $A$), and is a special case of a theorem by Faisant \textit{et al} \cite{Fa} for the first item, whereas allowing different summands,
Volkmann \cite{Vo} proved that, given positive real numbers 
$\alpha_1$, $\alpha_2$ and $\gamma$ such that
$\alpha_1+\alpha_2\le \gamma<1$, there exist\footnote{In Volkmann's construction, the sets of integers are sets of relative integers and not necessarily positive integers though.} $A_1,A_2$ such that $\dens{A_i}=\alpha_i$, $i=1,2$, and $\dens{A_1+A_2}=\gamma$; he actually proved the corresponding result for subsets of the circle too. A similar result was obtained by Nathanson \cite{Nath}, including a version for Schnirelmann's density.

More generally, we investigate the set $\D_k$ of possible values of the tuple
$$(\dens{A},\dens{2A},\ldots,\dens{kA})$$ 
when $A$ ranges over the set of sequences
for which all of these densities exist.
In parallel, we consider the similar problem in the circle
$\T=\R/\Z$ equipped with its Haar measure $\mu$.
Thus let $\E_k$ be the set of all the possible values of
$(\mu(A),\ldots,\mu(kA))$ for $A\subset \T$ for which these measures exist.
We may sometimes need to work with the subset $\E_k^o\subset \E_k$
of all the possible values of
$(\mu(A),\ldots,\mu(kA))$ for $A\subset \T$ open
and Riemann-measurable
and similarly $\E_k^c$, where we consider closed sets $A$.

There is a close connection between $\E_k$ and $\D_k$
because of Weyl's criterion for equidistribution, of which we now state a direct consequence.
For $A\subset \T$ and $\lambda \in \R\setminus \Q$, let  
$B_{\lambda, A}=\{n\in\N : \{\lambda n\}\in A\}$, where $\{x\}=x-\lfloor x\rfloor$ denotes the fractional part of the real number $x$.
\begin{theorem}
\label{Weyl}
For any irrational number $\lambda$
and any Riemann-measurable function $f : [0,1]\rightarrow\R$, we have
$$
\lim_{x\rightarrow +\infty}\frac{1}{x}\sum_{n\leq x}f(\{\lambda n\})=\int f.
$$
In particular, for any Riemann-measurable
subset $A\subset \T$,
we have
$\dens{B_{\lambda, A}}=\mu(A)$.
The latter equality may be extended to open sets $A$.
\end{theorem}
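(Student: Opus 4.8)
The plan is to prove the averaging identity in three steps --- first for trigonometric polynomials, then for continuous $f$, and finally for Riemann-integrable $f$ --- and then to obtain the statements about sets by taking $f$ to be an indicator function. For the first step I would invoke Weyl's criterion: when $f(t)=e^{2\pi i h t}$ with $h\in\Z\setminus\{0\}$, the sum $\sum_{n\le x}e^{2\pi i h\lambda n}$ is geometric with ratio $e^{2\pi i h\lambda}\ne 1$ --- this is precisely where the irrationality of $\lambda$ enters --- so it is bounded in modulus by $2/\abs{e^{2\pi i h\lambda}-1}$ uniformly in $x$, and after dividing by $x$ it tends to $0=\int_0^1 e^{2\pi i h t}\,\mathrm{d}t$; the case $h=0$ is trivial, and linearity then gives the identity for all trigonometric polynomials.

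For the second step, since only the values $\{\lambda n\}\in[0,1)$ occur I may regard $f$ as a function on $\T$; given $\varepsilon>0$, Weierstrass approximation furnishes a trigonometric polynomial $P$ with $\sup\abs{f-P}<\varepsilon$, and I would bound $\abs{\frac1x\sum_{n\le x}f(\{\lambda n\})-\int f}$ above by $\frac1x\sum_{n\le x}\abs{f(\{\lambda n\})-P(\{\lambda n\})}+\abs{\frac1x\sum_{n\le x}P(\{\lambda n\})-\int P}+\abs{\int(P-f)}$, then let $x\to\infty$ and afterwards $\varepsilon\to 0$. The third step is the one calling for a little care: given $\varepsilon>0$, the Darboux criterion sandwiches $f$ between step functions $\varphi\le f\le\psi$ with $\int(\psi-\varphi)<\varepsilon$, and rounding the corners of these step functions produces \emph{continuous} $g_-\le f\le g_+$ whose integrals still differ by $O(\varepsilon)$; applying the continuous case to $g_-$ and $g_+$ traps both $\liminf$ and $\limsup$ of $\frac1x\sum_{n\le x}f(\{\lambda n\})$ between $\int g_-$ and $\int g_+$, and since $\varepsilon$ is arbitrary while $\int f\in[\int g_-,\int g_+]$, the limit exists and equals $\int f$. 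I expect this last sandwiching --- together with the routine verification that the corner-rounding costs only $O(\varepsilon)$ in the integral --- to be the only genuine obstacle; everything upstream is Weyl's criterion together with Weierstrass approximation.

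For the consequence about sets, I would take $f=\mathbf{1}_A$ for Riemann-measurable $A\subset\T$, so that $\mathbf{1}_A$ is Riemann-integrable with $\int\mathbf{1}_A=\mu(A)$ and $\frac1x\sum_{n\le x}\mathbf{1}_A(\{\lambda n\})=\frac1x B_{\lambda,A}(x)$, whence $\dens{B_{\lambda,A}}=\mu(A)$. For an open set $A$ I would write it as a disjoint union $\bigsqcup_{j\ge 1}I_j$ of open arcs; each finite subunion $\bigsqcup_{j\le N}I_j\subset A$ is Riemann-measurable, so the case just treated gives $\ldens{B_{\lambda,A}}\ge\sum_{j\le N}\mu(I_j)$ and therefore $\ldens{B_{\lambda,A}}\ge\mu(A)$. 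When $A$ is moreover Riemann-measurable --- in particular when it is a finite union of open intervals, the only situation used later --- this combines with the preceding sentence to give $\dens{B_{\lambda,A}}=\mu(A)$. (One should keep in mind that for an open set failing to be Jordan-measurable the density of $B_{\lambda,A}$ need not exist at all, so past this inner bound the Riemann-measurability hypothesis cannot simply be dropped.)
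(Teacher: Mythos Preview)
The paper does not supply its own proof here: the theorem is stated as a direct consequence of Weyl's equidistribution criterion, with the open-set extension simply attributed to Volkmann. Your three-step argument (exponentials, then continuous functions via Weierstrass, then Riemann-integrable functions via a Darboux sandwich with rounded corners) is the standard route and is correct.

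Your caution in the final paragraph is well placed and worth keeping. For a general open set one only obtains the inner bound $\ldens{B_{\lambda,A}}\ge\mu(A)$; the full equality can genuinely fail --- take $A$ to be the union over $n\ge 1$ of open intervals of length $\varepsilon\,2^{-n}$ centred at $\{\lambda n\}$, so that $\mu(A)\le\varepsilon$ while $B_{\lambda,A}=\N$. In the paper's actual applications the relevant open sets are always Riemann-measurable (compare the definition of $\E_k^o$, which requires both openness and Riemann-measurability), so your restriction is exactly what is needed downstream.
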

The extension to open sets is \cite[Lemma 4]{Vo}.
Further, Theorem \ref{Weyl} and a simple compactness argument shows that for any $\epsilon >0$, there exists a constant $C=C(\theta,\epsilon)$ such that
 for any interval $I$ of length at least $\epsilon$ we have $B_{\theta,I}(C)\geq 1$.
Finally, the operation $A\mapsto B_{\lambda,A}$
behaves well with respect to set addition.
\begin{lm}
\label{OpenSum}
Let $k\geq 2$ and $A_i\subset \T$ be open for $i=1,\ldots,k$ and $\lambda$ irrational.
Then $\dens{B_{\lambda, \sum_{i=1}^k A_i}}=
\mu(\sum_{i=1}^kA_i)$.
\end{lm}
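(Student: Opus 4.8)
The plan is to reduce the statement to the single‑summand case, which is exactly the open‑set form of Theorem \ref{Weyl}.

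The one substantive observation is that $S:=\sum_{i=1}^k A_i$ is itself an \emph{open} subset of $\T$. Since $\T$ is a topological group, each translate $a+A_2$ of the open set $A_2$ is open, hence $A_1+A_2=\bigcup_{a\in A_1}(a+A_2)$ is a union of open sets and thus open; iterating over the $k$ summands shows that $S$ is open. It is worth stressing that $S$ need not be Riemann‑measurable — an open subset of $\T$ can have boundary of positive measure (e.g. the complement of a fat Cantor set) — so it is genuinely the \emph{open}‑set extension of Theorem \ref{Weyl}, and not merely its Riemann‑measurable case, that is being used here. Granting that extension and applying it to the open set $S$ immediately yields $\dens{B_{\lambda,S}}=\mu(S)$, which is the assertion.

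Because the argument is this short, let me indicate where the real work has been hidden and why a naive self‑contained proof does not quite go through. From the inside one can exhaust $S$ by finite unions of open arcs $U\subseteq S$ with $\mu(U)\to\mu(S)$; each such $U$ is Riemann‑measurable and $B_{\lambda,U}\subseteq B_{\lambda,S}$, so Theorem \ref{Weyl} gives $\ldens{B_{\lambda,S}}\ge\sup_U\dens{B_{\lambda,U}}=\mu(S)$. The matching upper bound $\overline{\mathrm{d}}(B_{\lambda,S})\le\mu(S)$, however, resists the same sandwiching: an open set of arbitrarily small measure can be dense in $\T$, and a dense subset of $\T$ is covered by finitely many open arcs only if those arcs together have full measure, so $S$ cannot in general be approximated from the outside by Riemann‑measurable sets. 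This is precisely why the open‑set case of Theorem \ref{Weyl} (equivalently \cite[Lemma 4]{Vo}) is invoked as a black box rather than reproved. Finally I would record, for later use, the elementary inclusion $\sum_{i=1}^k B_{\lambda,A_i}\subseteq B_{\lambda,S}$: if $\{\lambda n_i\}\in A_i$ for each $i$ then, reducing $\lambda(n_1+\cdots+n_k)$ modulo $1$, one gets $\{\lambda(n_1+\cdots+n_k)\}\in\sum_i A_i=S$. It is this inclusion, combined with Lemma \ref{OpenSum}, that will let measure data on the circle be transferred to density data for integer sumsets.
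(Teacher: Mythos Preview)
Your argument is correct for the lemma \emph{as literally stated}, but then the statement carries no content beyond ``a sum of open sets is open'': the open-set clause of Theorem~\ref{Weyl} applies directly to $S=\sum_i A_i$ and there would be nothing to prove. The paper's proof, however, is aimed at the stronger assertion
\[
\dens{\textstyle\sum_{i=1}^k B_{\lambda,A_i}}=\mu\Big(\textstyle\sum_{i=1}^k A_i\Big),
\]
and it is this (with $A_1=\cdots=A_k=A$) that is used immediately after the lemma to deduce $\E_k^o\subset\D_k$: one must know that the \emph{integer} sumset $kB_{\lambda,A}$ --- not merely $B_{\lambda,kA}$ --- has density $\mu(kA)$. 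So the printed displayed equality is evidently a slip for this stronger one, and your ``for later use'' inclusion $\sum_i B_{\lambda,A_i}\subseteq B_{\lambda,S}$ combined with your lemma would yield only $\overline{\mathrm{d}}\big(kB_{\lambda,A}\big)\le\mu(kA)$, which is not enough for that application.

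The paper obtains the matching lower bound by shrinking each $A_i$ to $A_i^\epsilon=\{x\in A_i:\mathrm{dist}(x,\partial A_i)>\epsilon\}$ and proving a reverse-type inclusion $B_{\lambda,\sum_i A_i^\epsilon}\subset\sum_i B_{\lambda,A_i}$ for all sufficiently large $n$: if $\{\lambda n\}=\sum_i a_i$ with $a_i\in A_i^\epsilon$, equidistribution furnishes $n_1,\ldots,n_{k-1}\le n/k$ with $\{\lambda n_j\}$ within $\epsilon/k$ of $a_j$, and then the remainder $n_k=n-\sum_{j<k}n_j$ automatically lands in $B_{\lambda,A_k}$. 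Taking densities and letting $\epsilon\to 0$ gives $\ldens{\sum_i B_{\lambda,A_i}}\ge\mu(S)$. This construction of an additive decomposition of $n$ inside the $B_{\lambda,A_i}$ is the substantive step that your argument does not supply.
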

\begin{proof}
For $A\subset\T$ open, let
$A^\epsilon=\{x\in A : \textrm{dist}(x,\partial A)>\epsilon\}$.
Thus $A=\bigcup_{\epsilon>0} A^\epsilon$ and $\mu(A)=\lim_{\epsilon\rightarrow 0}\mu(A^\epsilon)$.
Further, $\sum_i A_i=\bigcup_{\epsilon >0} \sum_i A_i^\epsilon$.
We observe that 
\begin{equation}
\label{eq:encadrement}
B_{\lambda,\sum_i A_i^\epsilon}\subset \sum_i B_{\lambda,A_i}\subset B_{\lambda,\sum_i A_i}.
\end{equation}
The rightmost inclusion is easy; for the leftmost one, let
$x\in B_{\lambda,\sum_i A_i^\epsilon}$, thus
$x=\sum_i a_i$ where
$a_i\in A_i^\epsilon$. Consequently,
$(a_i-\epsilon/k,a_i+\epsilon/k)\subset A_i$ for $i\in \{1,\ldots,k-1\}$.
If $n$ is large enough (larger than some constant $C(\epsilon,k)$),
there exists $n_1,\ldots,n_{k-1}\leq n/k$ such that 
$\{n_i\lambda\}\in (a_i-\epsilon/k,a_i+\epsilon/k)$.
Let $n_k=n-n_1-\cdots-n_{k-1}> 0$. Then
$\{n_k\lambda\}=\{n\lambda\}-\{n_1\lambda\}-\cdots -\{n_{k-1}\lambda\} \text{ mod }1$, which implies
$\{n_k\lambda\} \text{ mod }1\in (a_k-\epsilon,a_k+\epsilon)\subset A_k
$, in other words $n_k\in B_{\lambda,A_k}$.
Thus $n\in \sum_i B_{\lambda,A_i}$.

Taking densities and applying Theorem \ref{Weyl} in equation \eqref{eq:encadrement}, we find that
$$
\mu\Big(\sum_is A_i^\epsilon\Big)\leq 
\underline{\mathrm{d}}\Big(\sum_i B_{\lambda,A_i}\Big)\leq\overline{\mathrm{d}}\Big(\sum_i B_{\lambda,A_i}\Big)\leq
\mu\Big(\sum_i A_i\Big).
$$
Letting $\epsilon\rightarrow 0$, we conclude.
\end{proof}
Consequently, $\E_k^o\subset \D_k$; in particular, the second item of Theorem \ref{deuxDensites} implies the first one when $\alpha >0$, but we will provide another proof for it.
Further, Raikov's theorem together with Theorem \ref{deuxDensites}
means that $\E_2=\E_2^o=\E_2^c=\{(\alpha,\beta)\in [0,1]^2 : \beta\geq \min(1,2\alpha)\}$.

To complete our description of $\D_2$, we need to understand
which pairs $(\alpha,\beta)$ with $\beta<2\alpha$ belong to it,
which we do in the next theorem. For an integer $n$, let $v_2(n)$ be
its dyadic valuation; we extend it to rational numbers by letting
$v_2(p/q)=v_2(p)-v_2(q)$.

\begin{thm}
\label{rationnel}
Let $\beta\in\mathbb{Q}\cap(0,1)$ such that $v_2(\beta)\le0$, let $\alpha\in(0,1)$ satisfy $\beta < 2\alpha$ and $g_0$ denote
$\min\{g\ge 1\,:\, g\beta\text{ is odd}\}$. Then there exists a sequence $A\subset\mathbb{N}$ such 
that $\dens{A}=\alpha$, $\dens{2A}=\beta<2\alpha$ if and only if
$$
 \frac{\beta}2<\alpha\le \frac{\beta}2+\frac1{2g_0}.
$$ 
\end{thm}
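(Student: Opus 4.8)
The plan is to prove the two implications separately: the bound on $\alpha$ is necessary by Kneser's theorem, and sufficient by an explicit construction obtained by diluting a union of residue classes. For necessity, I start from $A\subset\N$ with $\dens A=\alpha$ and $\dens{2A}=\beta<2\alpha$. First, $\alpha\le1/2$: otherwise $\ldens A+\ldens A>1$ forces $2A$ to be cofinite and $\beta=1$, against $\beta<1$. So $\ldens A+\ldens A\le1$, and since $\ldens{2A}=\beta<2\alpha=\ldens A+\ldens A$, Kneser's theorem \cite{Kn} furnishes an integer $g\ge1$ and an integer $t$ with $\alpha\le t/g$ and $\beta=\dens{2A}\ge(2t-1)/g$, and such that $2A$ agrees up to a set of density zero with a union of residue classes modulo $g$, so in particular $g\beta\in\Z$. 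The key observation is that $g\beta$ must be \emph{odd}: were it even, then from $g\beta\ge2t-1$ and the oddness of $2t-1$ we would get $g\beta\ge2t$, hence $\beta\ge2t/g\ge2\alpha$, contradicting $\beta<2\alpha$. Therefore $g\in\{h\ge1:h\beta\text{ odd}\}$, so $g\ge g_0$, whence $2\alpha-\beta\le2t/g-(2t-1)/g=1/g\le1/g_0$, i.e. $\alpha\le\beta/2+1/(2g_0)$, as wanted.

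For sufficiency, assume $\beta/2<\alpha\le\beta/2+1/(2g_0)$, and set $p:=g_0\beta$ — an odd positive integer with $p<g_0$ since $0<\beta<1$ — and $m:=(p+1)/2\in\Z$, so that $2(m-1)=p-1<g_0$. The condition on $\alpha$ reads $p<2\alpha g_0\le p+1$, hence $\rho:=\frac{2\alpha g_0}{p+1}\in\bigl(\tfrac{p}{p+1},1\bigr]\subseteq(1/2,1]$. Fix an irrational $\theta$ and put $X=\{k\ge1:\{k\theta\}<\rho\}$, so that $\dens X=\rho$ by Theorem~\ref{Weyl}. The candidate is
$$
A=\bigcup_{c=0}^{m-1}\bigl(c+g_0X\bigr)\subset\N .
$$
As the classes $c+g_0\Z$, $0\le c\le m-1$, are distinct, $\dens A=\sum_{c}\rho/g_0=m\rho/g_0=(p+1)\rho/(2g_0)=\alpha$. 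Moreover every element of $2A$ is congruent modulo $g_0$ to some $c+c'$ with $0\le c,c'\le m-1$, hence to one of $0,1,\dots,p-1$ (no wrap-around, as $p-1<g_0$), so $\dens{2A}\le p/g_0=\beta$. For the matching lower bound, fix $s\in\{0,\dots,p-1\}$ and write $s=\lfloor s/2\rfloor+\lceil s/2\rceil$, both summands lying in $\{0,\dots,m-1\}$; since $\ldens X+\ldens X=2\rho>1$, the elementary pigeonhole fact that two subsets of $\N$ whose lower densities sum to more than $1$ have a cofinite sumset shows that $X+X$ contains every large integer, so $(c+g_0X)+(c'+g_0X)=s+g_0(X+X)$ contains every large integer $\equiv s\pmod{g_0}$; thus the class $s+g_0\Z$ contributes $1/g_0$ to $\dens{2A}$, and summing over the $p$ values of $s$ gives $\dens{2A}\ge p/g_0$. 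Hence $\dens{2A}=\beta$.

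I expect the main obstacle to be in the necessity direction, and precisely the extraction from Kneser's theorem of a single modulus $g$ simultaneously bounding the number of residue classes needed to cover $A$ and forcing $\dens{2A}$ to have denominator dividing $g$, so that the parity remark ``$g\beta$ is odd'' applies; granting this, the argument closes immediately. The sufficiency direction is comparatively soft: beyond bookkeeping it uses only the density of a Bohr-type set (Theorem~\ref{Weyl}) and the pigeonhole fact about sumsets of sets of density exceeding $1/2$, the latter being exactly what allows one to realize arbitrary real $\alpha$ in the admissible range rather than only the rational values coming from an honest union of residue classes.
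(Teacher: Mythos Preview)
Your proposal is correct. For necessity, you and the paper both use Kneser's theorem to produce a modulus $g$ with $g\beta$ an odd integer and $2\alpha-\beta\le 1/g$, whence $g\ge g_0$; the paper is more explicit about the passage (taking $g$ minimal, distinguishing the residue classes $A$ meets from those it meets infinitely often, and applying Kneser in $\Z/g\Z$ to force $|2A_g|=2|A_g|-1$), whereas you compress this into a single invocation and close with your parity observation, but the substance is identical. For sufficiency the routes differ: the paper takes $A$ to be a union of $r=(g_0\beta+1)/2$ translates of a set $A_{g_0}\subset g_0\N$ coming from the probabilistic construction of Proposition~\ref{prop:specialcase} (case $k=1$), which is engineered to give $\dens{2A_{g_0}}=1/g_0$ for any prescribed $\dens{A_{g_0}}\in(0,1/g_0]$; you instead use a dilated Bohr set $g_0X$ with $X=\{k:\{k\theta\}<\rho\}$, noticing that the hypothesis on $\alpha$ forces $\rho>1/2$, so that $X+X$ is cofinite by a bare pigeonhole count. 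Your construction is more elementary and self-contained, avoiding the random-set machinery entirely; the paper's has the advantage of recycling a tool already built for the general $k$-fold problem of Theorem~\ref{generalk}.
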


\begin{ex}
The pair $\alpha=4/9$, $\beta=5/9$ enforces $g_0\le3$ and $1\le r\le 2$, whence
$\beta=1$, $1/2$ or $1/3$, a contradiction.
\end{ex}

\begin{ex}
The pair $\alpha=1/5$, $\beta=3/10$ yields $g_0\le10$ and $1\le r\le 5$. Choosing $r=2$
gives the required condition.
\end{ex}

We briefly discuss iterated sumsets. It is not clear what constraints
a tuple $(\alpha_i)_{i\in [k]}$ must satisfy for a set $A\subset \R/\Z$ satisfying $\mu(iA)=\alpha_i$ to exist; we certainly need
$\alpha_i\geq \min(1,\alpha_j+\alpha_{i-j})$ for any $j<i$ due to Raikov's theorem but it may not be sufficient.
In particular,  we will deduce the following constraint from a theorem of Gyarmati, Konyagin and Ruzsa \cite{Gy}.
\begin{theorem}
\label{th:Gy}
There exists a constant $c>0$ such that the following holds.
Let $A\subset \T$ be closed, and suppose that $\mu(2A)<c$. Then $\mu(3A)\geq \frac32\mu(2A)$.
\end{theorem}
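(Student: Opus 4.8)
The plan is to transfer the inequality to the cyclic groups $\Z/p\Z$ with $p$ prime, and there to feed it into the combinatorial theorem of Gyarmati, Konyagin and Ruzsa \cite{Gy}: in the shape we need, that theorem supplies absolute constants $\kappa>0$ and $C$ such that any $B\subseteq\Z/p\Z$ with $|2B|\le\kappa p$ satisfies $|3B|\ge\frac32|2B|-C$ — primality of $p$ keeps proper subgroups out of the way, and the hypothesis $|2B|\le\kappa p$ prevents wrap-around. The constant $c$ of the statement will be this $\kappa$; since the conclusion is vacuous when $3A=\T$ we may moreover assume $c<2/3$. We may also assume $A\neq\emptyset$ and $\mu(2A)>0$, since $3A\supseteq 2A+a_0$ already forces $\mu(3A)\ge\mu(2A)$. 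Finally, $A$, $2A$ and $3A$ are compact.

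Fix a prime $p$, cut $\T$ into the half-open arcs $J_k=[k/p,(k+1)/p)$ indexed by $k\in\Z/p\Z$, and put $B_p=\{k:J_k\cap A\neq\emptyset\}$. The argument then rests on two easy inputs and one hard one. First, $A\subseteq\bigcup_{k\in B_p}J_k\subseteq A+(-1/p,1/p)$, so taking measures and letting $p\to\infty$ (using that $A$ is closed) gives $|B_p|/p\to\mu(A)$. Second, since a sum of three arcs $J_l+J_m+J_n$ lies inside $J_{l+m+n}\cup J_{l+m+n+1}\cup J_{l+m+n+2}$, applying this to triples of points of $A$ yields $\bigcup_{k\in 3B_p}J_k\subseteq 3A+(-3/p,3/p)$, and hence $\limsup_p|3B_p|/p\le\mu(3A)$; the same reasoning with two summands gives $\limsup_p|2B_p|/p\le\mu(2A)<\kappa$, so $|2B_p|<\kappa p$ once $p$ is large, whence \cite{Gy} applies to $B_p$ and gives $|3B_p|\ge\frac32|2B_p|-C$. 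Combining these, $\mu(3A)\ge\limsup_p(|3B_p|/p)\ge\frac32\liminf_p(|2B_p|/p)$, so the whole matter reduces to the one hard input, namely the matching lower bound
$$
\liminf_{p\to\infty}\frac{|2B_p|}{p}\ \ge\ \mu(2A).
$$

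This last estimate is where I expect the real work to lie, and I would flag it as the main obstacle. The difficulty is that $J_m+J_n$ straddles two consecutive arcs, so $\bigcup_{k\in 2B_p}J_k$ need not contain $2A$; one only gets $2A\subseteq\bigcup_{k\in 2B_p+\{0,1\}}J_k\subseteq 2A+(-2/p,2/p)$, and $|2B_p+\{0,1\}|=|2B_p|+r(2B_p)$ where $r(X)$ is the number of maximal runs of $X$ in $\Z/p\Z$. Thus $(|2B_p|+r(2B_p))/p\to\mu(2A)$ comes for free (again using that $2A$ is closed), and the task is to show that the run count is negligible, $r(2B_p)=o(p)$. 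One half of this is clean: runs of $2B_p$ separated by a gap of length $\ge2$ give rise to distinct complementary arcs of the closed set $2A$ of length $\ge1/p$, and these number $o(p)$ since the complementary arcs have total length at most $1$. The length-one gaps of $2B_p$ are genuinely more delicate, and controlling their number — or, what amounts to the same thing, refining the discretisation so that it is robust to $\Theta(1/p)$-scale boundary effects — is the crux. Once $r(2B_p)=o(p)$ is secured, $|2B_p|/p\to\mu(2A)$ follows and the proof closes. (It would also be prudent to pin down the exact statement and the admissible threshold $\kappa$ in \cite{Gy}, since that is precisely what fixes the admissible $c$.)
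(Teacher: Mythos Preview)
Your discretisation $B_p=\{k:J_k\cap A\neq\emptyset\}$ is natural, and your reduction to the single estimate $\liminf_p |2B_p|/p\ge\mu(2A)$ is correct. But as you yourself flag, the length-one gaps of $2B_p$ are the obstruction, and you have not dealt with them: you show that gaps of length $\ge 2$ correspond to complementary arcs of $2A$ of length $\ge 1/p$ (and those are $o(p)$ in number), but nothing in your argument rules out $\Theta(p)$ many isolated holes in $2B_p$ created purely by rounding. Since you leave this open, the proof is incomplete as written.

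The paper bypasses the difficulty rather than confronting it. Instead of discretising a general closed $A$, it first reduces to \emph{simple} sets $A'$ (finite unions of closed intervals). The reduction uses compactness: $kA=\bigcap_{\delta>0}(kA+I_{k\delta})$, and for each $\delta$ one can sandwich $A\subset A'\subset A+I_\delta$ with $A'$ simple, so that $\mu(kA')\le\mu(kA)+\epsilon$ for $k=2,3$. For simple $A'$, the paper uses the cleaner discretisation $A'(p)=\{j\in\Z/p\Z:j/p\in A'\}$; because $kA'$ is again a finite union of intervals, one has $|kA'(p)|=p\,\mu(kA')+O_{A',k}(1)$ with no run-counting needed, and Proposition~\ref{prop:Gy} applies directly. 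Letting $p\to\infty$ and then $\epsilon\to 0$ finishes.

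So the missing idea is the outer approximation by simple sets: it trades your delicate $o(p)$ run-count estimate for a trivial $O(1)$ boundary term, at the cost of an extra limiting step. Your route could perhaps be completed, but the paper's detour through simple sets is both shorter and more robust.
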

In view of Lev's analogous result \cite{Lev} on finite sets of integers,
one may more generally imagine that
$\mu((k+1)A)\geq \frac{k+1}{k}\mu(kA)$
under certain restrictions on $\mu(kA)$.
Note that another result from \cite{Gy}
implies that the constant $c$ may 
not be taken to be 1. Gyarmati \emph{et al.} conjecture
that its optimal value is 1/2.
Note that for any finite set $A$ of integers, we have
$2\abs{3A}\geq 3\abs{2A}-1$.
On the other hand, due to the Plünnecke-Ruzsa inequalities,
we know that if $\dens{2A}\leq K\dens{A}$, we must have
$\dens{3A}\ll K^3\dens{A}$.
Similarly, in the circle, if $\mu(2A)<3\mu(A)$ and $\mu(A)$ is small
enough, Moskvin \textit{et al.} \cite{Bi} showed that $A$ must satisfy strict structural conditions that imply that $\mu(3A)\leq 3(\beta-\alpha)$.

We solve partially the problem with $k=3$.
\begin{theorem}
\label{triplets}
Let $(\alpha,\beta,\gamma) \in (0,1]^3$, and suppose that $\beta < \min(3\alpha,1)$
and $\gamma \in [\min(1,3\beta/2),\min(1,2\beta-\alpha)]$ or that
$\beta =3\alpha$ and $\gamma \in [3\beta/2,2\beta]$.
Then $(\alpha,\beta,\gamma)\in \E_3$.
\end{theorem}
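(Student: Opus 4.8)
The plan is to reduce the problem to the integer setting via Lemma \ref{OpenSum} — it suffices to realise each target triple by an open, Riemann-measurable $A\subset\T$ — and then to build $A$ as a finite union of intervals whose "skeleton" is a carefully chosen finite set of integers (rescaled into $\T$), so that the arithmetic of the sumsets mirrors a finite-set computation. Concretely, I would first dispose of the easy boundary case $\beta=3\alpha$: here one takes $A$ to be a single interval of length $\alpha$, so $2A$ has measure $\min(1,2\alpha)$... but that forces $\beta=\min(1,2\alpha)$, not $3\alpha$, so instead one takes $A$ to be a short arithmetic-progression-like union of three tiny intervals around $0,\delta,2\delta$ fattened to width $\eta$; then $\mu(A)\approx 3\eta$, $\mu(2A)\approx 5\eta$, $\mu(3A)\approx 7\eta$, and by interpolating the lengths and gaps one sweeps out the segment $\gamma\in[3\beta/2,2\beta]$ while keeping $\beta=3\alpha$. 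The two extreme slopes $\gamma=3\beta/2$ and $\gamma=2\beta$ correspond to "$A$ looks like an interval" and "$A$ looks like a generic sparse set".

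For the main range $\beta<\min(3\alpha,1)$, the idea is to write $A = B \sqcup C$ as a disjoint union of two pieces living in widely separated scales: a "dense block" $B$ which is essentially an interval (or a union of a few intervals) contributing the bulk of the measure and forcing $2B$, $3B$ to be intervals of controlled length, plus a "spread-out block" $C$, a union of many thin intervals placed so that $C+C$, $C+C+C$, $B+C$, etc. are as large or as small as we wish. The parameter $\beta<3\alpha$ gives us room: since $\mu(2A)$ can be made to range over $[\min(1,2\alpha),\min(1,3\alpha)]$ roughly (the lower end from $A$ being an interval, the upper end from $A$ being spread out enough that doubling triples the measure), fixing $\beta$ in the open interval pins down "how spread out" $A$ must be. Then one checks that, given that degree of spreading, $\mu(3A)$ is free to take any value in $[\min(1,3\beta/2),\min(1,2\beta-\alpha)]$: the upper bound $2\beta-\alpha$ is the content of the Moskvin--Plünnecke-type constraint mentioned before the theorem (when $2A$ is not too large relative to $A$, $3A$ cannot be too large), while the lower bound $3\beta/2$ is exactly the Gyarmati--Konyagin--Ruzsa barrier of Theorem \ref{th:Gy} when $\beta$ is small, and for larger $\beta$ the trivial $\mu(3A)\ge\mu(2A)$ combined with a short direct construction suffices. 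The construction achieving the minimum $\gamma=3\beta/2$ should use a set $A$ of the form $I\cup(d+J)$ (an interval together with a translate of a shorter interval) tuned so that $2A$ is a single interval and $3A$ is just half again as long; achieving the maximum $\gamma=2\beta-\alpha$ should use $A=I\cup S$ with $S$ a generic-position union of tiny intervals so that all threefold sums involving $S$ twice are "full-dimensional" in the available room. Interpolating continuously between these two extreme constructions — by a one-parameter family of placements of the thin intervals — and invoking the fact that measures of sumsets of finite unions of intervals depend continuously on the endpoints, one fills in the whole interval of admissible $\gamma$.

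The main obstacle I expect is the simultaneous control of all three quantities $\mu(A)$, $\mu(2A)$, $\mu(3A)$ by a single family of sets: tuning the spread of $A$ to hit a prescribed $\mu(2A)=\beta$ and then independently tuning $\mu(3A)$ requires enough free parameters that moving one does not disturb the other two. The clean way to handle this is a two-scale (or three-scale) separation argument: put the "$\beta$-tuning" structure at a coarse scale and the "$\gamma$-tuning" structure at a much finer scale inside each coarse interval, so that the fine structure is invisible to the coarse sumset count (it only perturbs $\mu(2A)$ by $o(1)$, absorbed by a limiting/continuity argument) while the coarse structure is transparent to the fine sumset count. Making this rigorous means carefully quantifying, for a union of intervals, how the measure of the $k$-fold sumset changes under a small perturbation of the interval lengths and positions — a routine but slightly tedious continuity/compactness estimate — and checking that the admissible region for $(\mu(2A),\mu(3A))$ carved out by the two-scale family is exactly the closed region described in the statement, with the Raikov, Gyarmati--Konyagin--Ruzsa (Theorem \ref{th:Gy}) and Moskvin \emph{et al.} inequalities guaranteeing we are not claiming anything outside it.
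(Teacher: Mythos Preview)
Your plan is heading in a much more complicated direction than needed, and it contains concrete errors.

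First, a point of orientation: the statement concerns $\E_3$, i.e.\ measures in the circle, so there is nothing to ``reduce to the integer setting'' via Lemma~\ref{OpenSum}; you are already working in $\T$.

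Second, your boundary construction for $\beta=3\alpha$ is wrong. If $A$ is a union of three intervals of width $\eta$ centred at $0,\delta,2\delta$ with $\eta\ll\delta$, then $2A$ is five intervals of width $2\eta$, so $\mu(2A)=10\eta$, not $5\eta$; hence $\mu(2A)/\mu(A)=10/3\neq 3$. Even your stated numbers give $5/3\neq 3$. The correct and much simpler observation is that \emph{two} intervals already suffice: if $A=(0,x)\cup(y,z)\subset[0,1/3]$ and the three intervals of $2A$ are pairwise disjoint, then $\mu(2A)=3\mu(A)$ exactly.

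Third, and most importantly, the paper dispenses entirely with your multi-scale $B\sqcup C$ apparatus. It takes $A=(0,x)\cup(y,z)\subset[0,1/3]$ with \emph{at most two} components, writes down $2A$ and $3A$ explicitly as unions of three and four intervals respectively, and then does a finite case analysis on which of these intervals overlap. In each case the system
\[
\alpha=x+z-y,\qquad \beta=3\alpha-(\text{overlap in }2A),\qquad \gamma=6\alpha-(\text{overlap in }3A)
\]
is linear in $(x,y,z)$ and one checks by hand exactly which $(\alpha,\beta,\gamma)$ admit solutions satisfying the relevant inequalities. No continuity, interpolation, or scale-separation argument is needed; there are no asymptotic $o(1)$ terms to absorb. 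Your two-scale outline, besides being vague (``should use'', ``I expect''), introduces free parameters and limiting arguments that are simply unnecessary for a three-parameter target with a two-interval set. The route you sketch might in principle be completable, but as written it is not a proof, and the actual argument is an explicit finite computation.
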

For general $k$, our understanding of $\E_k$ and $\D_k$ is yet poorer. Note that in general, our sets $A\subset\N$ satisfy
$\dens{(k+1)A}\geq \frac{k+1}{k}\dens{kA}$,
which, in view of the aforementioned result of Lev, may be inevitable.
\begin{theorem}
\label{generalk}
Let $\alpha=(\alpha_1,\ldots,\alpha_{k+1})\in [0,1]^{k+1}$, where $k\geq 1$.
\begin{enumerate}[label=\rm \alph*)]
\item If $\alpha_1=\cdots=\alpha_{k-1}=0$ and $\alpha_{k+1}\geq \frac{k+1}{k}\alpha_{k}$, or $\alpha_{k+1}\geq \alpha_k$ and
$\alpha_{k+1}$ is the inverse of an integer, then $\alpha\in \D_{k+1}$.
\item If $\alpha_1=\cdots=\alpha_{k}=0$, then $\alpha\in \E_{k+1}$.
\item If $\alpha_i=i\alpha$ for each $i$ and some $\alpha\leq 1/(k+1)$, then $\alpha\in \E_{k+1}^0\subset \D_{k+1}$.
\end{enumerate}
\end{theorem}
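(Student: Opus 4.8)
The plan is to treat the three items separately; c) and b) are quick, and a) carries all the weight.

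\emph{Items c) and b).} For c), take $A=\emptyset$ if $\alpha=0$ and $A=(0,\alpha)\subset\T$ otherwise; since $i\alpha\le(k+1)\alpha\le1$ for $i\le k+1$, the $i$-fold sumset of this open interval is $(0,i\alpha)$ (equal to $\T$ up to one point when $i\alpha=1$), so $\mu(iA)=i\alpha=\alpha_i$, and as $A$ is open and Riemann-measurable, $\alpha\in\E_{k+1}^{o}\subset\D_{k+1}$ by Lemma~\ref{OpenSum} and the remark following it. For b), assume $\alpha_{k+1}>0$ (else $A=\{0\}$), let $C\subset[0,\tfrac1{k+1}]$ be the set of reals whose base-$(k+2)$ expansion uses only the digits $0$ and $1$, and set $A=\alpha_{k+1}C$, viewed in $\T$. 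Expanding in base $k+2$ shows that for $1\le j\le k+1$ the sumset $jC$ is exactly the set of reals all of whose base-$(k+2)$ digits lie in $\{0,\dots,j\}$; for $j\le k$ this is a self-similar set covered by $(j+1)^{n}$ intervals of length $(k+2)^{-n}/(k+1)$, hence Lebesgue-null, whereas for $j=k+1$ the digit set is complete and $(k+1)C=[0,1]$. Since $\alpha_{k+1}\le1$ there is no wrap-around in $\T$ for $jA$ with $j\le k+1$, so $\mu(jA)=\alpha_{k+1}\mu(jC)$, which is $0$ for $j\le k$ and $\alpha_{k+1}$ for $j=k+1$; all these sets are closed, so $\alpha\in\E_{k+1}$.

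\emph{Item a), the construction.} Under either hypothesis the tuple has the shape $(0,\dots,0,\alpha_k,\alpha_{k+1})$ with $k-1$ leading zeros, so we must produce $A\subset\N$ with $\dens{jA}=0$ for $j\le k-1$, $\dens{kA}=\alpha_k$, $\dens{(k+1)A}=\alpha_{k+1}$. The plan is to take for $A$ a random set of ``pseudo $s$-th powers'' — each $n$ retained independently with probability $\asymp c\,n^{\theta-1}$, for an exponent $\theta$ near $1/k$ and an intensity $c>0$ — intersected with a diophantine constraint, namely a congruence $m\mid n$ in case~(ii) and a condition $\{n\lambda\}\in J$ with $J\subset\T$ an interval and $\lambda$ irrational in case~(i). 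The guiding heuristic: almost surely the $j$-fold sumset $jS$ is null in $\N$ as soon as the mean number $\asymp n^{j\theta-1}$ of ordered representations $n=n_1+\dots+n_j$ with $n_i\in S$ tends to $0$ (that is, $j<1/\theta$); it is cofinite once that mean tends to $\infty$ ($j>1/\theta$); and at the critical order $j=1/\theta$ the mean is $\asymp c^{j}$ and the density of $jS$ is an a.s.\ deterministic value increasing continuously over $(0,1)$ as $c$ grows. Intersecting with the constraint restricts the $j$-fold sumset of $A$ to $\{n:\{n\lambda\}\in jJ\}$ (resp.\ to $m\N$) and multiplies the representation counts by a positive factor, so that the densities above get multiplied by $\mu(jJ)$ (resp.\ by $1/m$).

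\emph{Item a), choice of parameters.} In case~(ii) put $A=m\,S'$ with $S'$ a pseudo $s$-th-powers set: of exponent $\theta\in(\tfrac1k,\tfrac1{k-1})$ when $\alpha_k=1/m$; of exponent $\theta=1/k$ with $c$ tuned so that $\dens{kS'}=m\alpha_k$ when $0<\alpha_k<1/m$; and of exponent in $(\tfrac1{k+1},\tfrac1k)$ when $\alpha_k=0$; then $\dens{jA}=\tfrac1m\dens{jS'}$ has the required profile, with $\dens{(k+1)A}=1/m=\alpha_{k+1}$. In case~(i) with $\alpha_k>0$ (so necessarily $\alpha_k\le\tfrac{k}{k+1}$), take $J$ of length $\alpha_{k+1}/(k+1)$, so that $\mu((k+1)J)=\alpha_{k+1}$ and $\mu(kJ)=\tfrac{k}{k+1}\alpha_{k+1}\ge\alpha_k$ (equality exactly when $\alpha_{k+1}=\tfrac{k+1}{k}\alpha_k$), and set $A=\{n\in S:\{n\lambda\}\in J\}$ with $S$ of exponent in $(\tfrac1k,\tfrac1{k-1})$ in the equality case (this forces the saturation $\dens{kA}=\mu(kJ)=\alpha_k$) and $S$ of exponent $1/k$ with $c$ chosen so that $\dens{kA}=\alpha_k$ in the strict case (possible since $\dens{kA}$ then ranges continuously over all of $(0,\mu(kJ))$ as $c$ varies); in both cases $(k+1)S$ is a.s.\ cofinite, so $\dens{(k+1)A}=\mu((k+1)J)=\alpha_{k+1}$, while $\dens{jA}=0$ for $j\le k-1$ because $jS$ is already null. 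The remaining case $\alpha_k=0$ in~(i) needs no diophantine constraint: $\dens{(k+1)A}$ is tuned directly via the intensity of pseudo $(k+1)$-th powers (or via an exponent in $(\tfrac1{k+1},\tfrac1k)$ when $\alpha_{k+1}=1$).

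\emph{Main obstacle.} Everything reduces to the probabilistic/analytic core: that almost surely each $\dens{jA}$ \emph{exists} and equals the claimed value. Away from the critical order this is a second-moment estimate plus Borel--Cantelli, but one must verify it persists under the extra diophantine constraint; the key input there is the equidistribution of $(\{n_1\lambda\},\dots,\{n_{j-1}\lambda\})$ in $\T^{j-1}$ — a multidimensional Weyl statement valid for any irrational $\lambda$ — which guarantees that requiring $\{n_i\lambda\}\in J$ only scales the relevant representation counts by a positive factor depending on $\{n\lambda\}$. At the critical order $j=1/\theta$ the work is harder: one needs sharp first and second moments of the representation-count function (again in the presence of the constraint), a $0$--$1$ law yielding an a.s.\ deterministic limiting density, and the fact that this density is a continuous strictly increasing function of $c$ with full range $(0,1)$. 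These statements are refinements of the known theory of thin random additive bases (in the spirit of Erd\H{o}s--Tetali and its successors), the new feature being their compatibility with the diophantine condition cutting out $A$ inside the pseudo $s$-th powers.
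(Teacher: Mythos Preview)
Your proposal is correct and follows essentially the same route as the paper: for b) and c) you use exactly the paper's constructions (a suitably scaled Cantor-type set, resp.\ an interval), and for a) you build $A$ as a random pseudo-$k$th-power set intersected with a Bohr constraint $\{n\lambda\}\in J$ (or a congruence $m\mid n$), reducing everything to the probabilistic analysis of representation counts under that constraint --- precisely the content of the paper's Proposition~\ref{prop:specialcase} and Section~4.

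The only notable difference is organisational. The paper fixes $J=(0,\tfrac1{k+1})$ and the exponent $1/k$ once and for all, obtains a set with $\dens{(k+1)A}=1$ and $\dens{kA}$ tunable via the intensity $c$, and then post-composes with a Beatty-type dilation $a\mapsto\lfloor\theta a\rfloor$ (or with $a\mapsto qa$ and a finite set $U$) to reach the target pair $(\alpha_k,\alpha_{k+1})$. You instead take $J$ of length $\alpha_{k+1}/(k+1)$ from the start and, for the boundary cases, vary the exponent of the pseudo-powers away from $1/k$. Your variant is slightly more direct and has the pleasant feature that the saturation case $\alpha_{k+1}=\tfrac{k+1}{k}\alpha_k$ and the degenerate case $\alpha_k=0$ are handled uniformly; the paper's fixed-exponent approach makes the latter cases slightly awkward. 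On the other hand, your variant requires redoing the key computation of Lemma~\ref{LM4}/Proposition~\ref{pp2}~c) for a general interval $J$ rather than $(0,\tfrac1{k+1})$ --- the argument goes through unchanged, but you should be aware that the paper does not literally supply it. Either way the hard core is identical: the Erd\H{o}s--Tur\'an/Weyl equidistribution input, the Janson/second-moment bounds, and the Landreau-type computation of $\mathbb{P}(R_k(n)=0)$.
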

The last item is obvious by taking an interval of length $\alpha$,
and was also proven somewhat differently for $\D_k$ in \cite{Fa}.

In the next section, we prove the complete description of
$\D_2$ and $\E_2$ given in Theorems \ref{deuxDensites} and \ref{rationnel}. 

\section{\bf Sumsets in the integers}\label{S2}
\subsection{A preliminary reduction}
We show that Theorem \ref{generalk} a)
follows from the special case below, where $\alpha_{k+1}=1$ in the notation of that theorem.

For a real number $\theta >1$, let \begin{equation}\label{defT}
T_{k,\theta}=\Big\{n\ge1\,\mid\, 0<\{\theta n\}<\frac1{k+1}\Big\}.
\end{equation}
Note that $\dens{T_{k,\theta}}=1/(k+1)$ if $\theta$ is irrational,
while $T_{k,\theta}=\N$ if $\theta$ is an integer.
In any case, $(k+1)T_{k,\theta}=\N$.
\begin{prop}
\label{prop:specialcase}
Let $\beta\in [0,1)$ and integer $k\geq 1$.
There exists a set $A\subset T_{k,\theta}$ such that $iA$ has
density 0 for any $i <k$, whereas $kA$ has density $\beta$
inside $kT_{k,\theta}$ and $(k+1)A$ has density 1 in $\N$.
\end{prop}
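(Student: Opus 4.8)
The plan is to realise $A$ as a random subset of $T_{k,\theta}$, of the kind used to build pseudo $s$-th powers, and to tune a scaling parameter so that the density of $kA$ comes out exactly right. We treat the main case $\beta\in(0,1)$; the case $\beta=0$, and the (easier) case where $\theta$ is an integer, are variants indicated at the end. For $c>0$, let $A=A_c$ be the random set in which each $n\in T_{k,\theta}$ is included, independently, with probability $p_n=\min\bigl(1,cn^{1/k-1}\bigr)$, so $p_n=cn^{1/k-1}$ for all large $n$. We shall show that for a suitable $c=c(\beta)$ the set $A_c$ almost surely has the three required properties, whence any single realisation proves the proposition.

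Two of the properties hold almost surely for every $c$ and are comparatively soft. Since $\mathbb{E}[A(t)]\asymp c\,t^{1/k}$, a Chernoff bound and Borel--Cantelli give $A(t)\ll t^{1/k}$ for all large $t$ almost surely, and then $\abs{iA\cap[1,t]}\le\binom{A(t)+i-1}{i}\ll A(t)^i\ll t^{i/k}=o(t)$ for each $i<k$, so $\dens{iA}=0$. For $(k+1)A$, let $r_{k+1}(m)$ be the number of representations of $m$ as a sum of $k+1$ elements of $A$; an integral estimate together with the equidistribution of $(\{\theta n_1\},\ldots,\{\theta n_k\})$ gives $\mathbb{E}[r_{k+1}(m)]\asymp m^{1/k}\,\psi_{k+1}(\{\theta m\})$, where $\psi_{k+1}$ is the density of a sum of $k+1$ independent uniform variables on $(0,\tfrac1{k+1})$ --- continuous and strictly positive on $(0,1)$. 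For $m$ with $\{\theta m\}$ in a fixed compact subinterval of $(0,1)$ this mean tends to infinity, while the clustering term in Janson's inequality --- dominated by the pairs of representations sharing exactly one summand --- has size $\asymp m^{1/k}=o\bigl(\mathbb{E}[r_{k+1}(m)]^2\bigr)$; hence $\mathbb{P}(r_{k+1}(m)=0)\le\exp(-\Omega(m^{1/k}))$. Summing over $m$ and applying Borel--Cantelli places all large such $m$ in $(k+1)A$, and letting the subinterval exhaust $(0,1)$ yields $\dens{(k+1)A}=1$.

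The crux is to make $\dens{kA}$ exact. First, $\dens{kT_{k,\theta}}=\tfrac{k}{k+1}$ (by Lemma~\ref{OpenSum} applied to the $k$ intervals $A_i=(0,\tfrac1{k+1})$, whose $k$-fold sumset in $\T$ is $(0,\tfrac{k}{k+1})$; or Theorem~\ref{Weyl} directly when $k=1$). Next, writing $r_k(m)$ for the number of representations of $m$ as a sum of $k$ elements of $A_c$, a Chen--Stein/moment computation of the type used for pseudo $k$-th powers --- here $\mathbb{E}[r_k(m)]\to\lambda_c(s):=c^kL(s)$ with $L$ continuous and positive exactly on $(0,\tfrac{k}{k+1})$, and the expected number of pairs of representations sharing a summand is $O(m^{-1/k})=o(1)$ --- shows that $r_k(m)$ converges in law, as $m\to\infty$ with $\{\theta m\}\to s$, to a Poisson variable of mean $\lambda_c(s)$. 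Thus $\mathbb{P}(m\in kA)\to\rho_c(s):=1-e^{-\lambda_c(s)}$, and, $\rho_c$ being continuous, Theorem~\ref{Weyl} gives $\mathbb{E}[\abs{kA\cap[1,t]}]/t\to G(c):=\int_0^{k/(k+1)}\bigl(1-e^{-c^kL(s)}\bigr)\,ds$. A variance bound $\mathrm{Var}(\abs{kA\cap[1,t]})=O(t^{2-1/k})=o(t^2)$ --- only pairs $m,m'$ whose representation sets can share a summand contribute --- together with Chebyshev, Borel--Cantelli along $t=2^j$, and the monotonicity of $t\mapsto\abs{kA\cap[1,t]}$, upgrades this to $\abs{kA\cap[1,t]}/t\to G(c)$ almost surely. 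Finally $G$ is continuous and strictly increasing on $(0,\infty)$ with $G(0^+)=0$ and $\lim_{c\to\infty}G(c)=\tfrac{k}{k+1}$, so there is $c=c(\beta)$ with $G(c)=\tfrac{\beta k}{k+1}$; for that $c$, $\dens{kA}=\tfrac{\beta k}{k+1}=\beta\,\dens{kT_{k,\theta}}$, i.e.\ $kA$ has density $\beta$ inside $kT_{k,\theta}$.

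For $\beta=0$ one takes instead $p_n=\min\bigl(1,c(\log n)^{1/(k+1)}n^{1/(k+1)-1}\bigr)$ with $c$ a large absolute constant: then $A(t)\ll(\log t)^{1/(k+1)}t^{1/(k+1)}$, so $\dens{iA}=0$ for $i<k$ and in particular $\abs{kA\cap[1,t]}\ll A(t)^k=o(t)$, the required density $0$ inside $kT_{k,\theta}$; and $\mathbb{E}[r_{k+1}(m)]\asymp(\log m)\,\psi_{k+1}(\{\theta m\})$ while the clustering term is $o(1)$, so Janson gives $\mathbb{P}(r_{k+1}(m)=0)\le m^{-\Omega(c^{k+1})}$, summable once $c$ is large, and $\dens{(k+1)A}=1$ follows as before. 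When $\theta$ is an integer, $T_{k,\theta}=\N$ and $L$ above is simply a positive constant (no diophantine localisation), the argument being otherwise unchanged. The main obstacle throughout is this last point: pinning $\dens{kA}$ to the exact value $\tfrac{\beta k}{k+1}$ rests on the Poisson limit law for $r_k(m)$ and the resulting continuity and monotonicity of $c\mapsto G(c)$; the Janson estimate for $(k+1)A$ is the other technical ingredient, of the by-now-standard Erd\H{o}s--Tetali kind.
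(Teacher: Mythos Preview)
Your approach is essentially the paper's: a random subset of $T_{k,\theta}$ with pseudo-$k$th-power inclusion probabilities $p_n=cn^{1/k-1}$, a crude counting bound for $\dens{iA}=0$ when $i<k$, Janson's inequality and Borel--Cantelli for $\dens{(k+1)A}=1$, a Poisson approximation (the paper invokes Landreau's lemmas explicitly) together with a variance bound to pin $\dens{kA}$ almost surely to a continuous strictly increasing function of $c$, and finally the intermediate value theorem to select $c=c(\beta)$. The paper treats $k=1$ separately for exposition and develops the equidistribution input more carefully through a recursion on piecewise-polynomial densities $f_j$ (its Lemmas~\ref{lm:induction} and~\ref{LM4}) rather than appealing directly to the convolution density $\psi_{k+1}$, and your handling of the endpoint $\beta=0$ via pseudo-$(k{+}1)$th powers with a logarithmic boost is in fact more explicit than the paper's, which only details that endpoint for $k=1$.
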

In particular, we have $\dens{kA}=\beta k/(k+1)$
if $\theta$ is irrational while $\dens{kA}=\beta$ if $\theta$ is an integer.

We now deduce Theorem \ref{generalk} a) from
Proposition \ref{prop:specialcase}.
Let $\alpha \in [0,1]^{k+1}$ be as in the hypothesis of the former theorem,
and let $\beta'=\alpha_k$ and $\gamma'=\alpha_{k+1}$.
We distinguish several cases.
 
 \begin{enumerate}
 \item[a)] We first assume that $\gamma'$ is an irrational number.
 Let $A$ be the set given  in Proposition \ref{prop:specialcase}
 with parameters $\theta=\frac1{\gamma'}$ and $\beta=\frac{\beta'}{\gamma'}$
 and $A'$ be defined by 
 $$
 A'=\{\lfloor \theta a\rfloor,\ a\in A\}.
 $$
Since $A\subset T_{k,\theta}$ we have 
$$
\forall\, a_1,\dots,a_{k+1}\in A, \quad \lfloor\theta a_1\rfloor+\lfloor\theta a_2\rfloor+\cdots +\lfloor\theta a_{k+1}\rfloor=\lfloor\theta(a_1+a_2+\cdots+a_{k+1})\rfloor.
$$
Since $\theta>1$, we get $\dens{j A'}=\theta^{-1} \dens{jA}$, $j=1,2,\dots,k+1$. \\
This yields
Theorem \ref{generalk} a) when $\gamma'$ is an irrational number.

\medskip
\item[b)] If $\gamma'$ is the inverse of a positive integer $q$, we use again
Proposition \ref{prop:specialcase}
 with parameters $\theta=\frac1{\gamma'}$ and $\beta=\frac{\beta'}{\gamma'}$ to generate a set $A$ 
and define a set $A_q=\{qa,\ a\in A\}$ satisfying $$\dens{(k-1)A_q}=0<\dens{kA_q}=\frac{\beta}{q}<\dens{(k+1)A_q}=\frac1q.$$
 
 \medskip
\item[c)] We finally assume that $\gamma'=\frac{s}{q}$ is a rational number with $2\le s < q$. 
Upon multiplying numerator and denominator by appropriate numbers,
we may assume that $s=(k+1)r$ for some integer $r$
satisfying $3\le r < \frac q{k+1}$.
Let
$
U=\{0,1,\dots,r-2,r\}.
$
Then $|jU|=jr$ for any $j$. Letting $A'=U+A_q$, we thus obtain
$$\left\{
\begin{aligned}
&\dens{(k-1)A'}=|(k-1)U|\times \dens{(k-1)A_q}=0,\\ 
&\dens{kA'}=|kU|\times \dens{kA_q}=\frac{kr\beta}{q}=\frac{k}{k+1}\beta \gamma ',\\ 
&\dens{(k+1)A'}=|(k+1)U|\times \dens{(k+1)A_q}=\frac{(k+1)r}{q}=\gamma'.
\end{aligned}\right.
$$
\qedhere
\end{enumerate}

This concludes the proof of Theorem \ref{generalk} a),
assuming Proposition \ref{prop:specialcase}.
We will now prove the latter, focussing first on the case $k=1$ (so concerning twofold sumsets, that is Theorem \ref{deuxDensites}), since it is much more simple
than, while retaining some important features of, the general case, which we handle later.

\subsection{Twofold sumsets}

Before embarking on the proof of Proposition \ref{prop:specialcase}
in the case $k=1$, we need a quantitative version
of Weyl's criterion (Theorem \ref{Weyl}), due to 
Erd\H{o}s and Tur\'an \cite[Theorem III]{ET}.
\begin{theorem}
\label{th:ET}
For any sequence $s_j$
of elements of the torus
and any interval $A$, we
have for any integers $n$ and $m$ the bound
$$
\abs{\frac{1}{n}\abs{\{1\leq j\leq n : s_j\in A\} }- \mu(A)}\ll
\frac{1}{m}+\frac{1}{n}\sum_{k=1}^m\frac{1}{k}
\abs{\sum_{j=1}^ne^{2i\pi s_jk}},
$$
where the implied constant is absolute.
\end{theorem}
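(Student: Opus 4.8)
The plan is to sandwich the indicator $\mathbf 1_A$ of the interval $A$ between two trigonometric polynomials of degree at most $m$ and then simply evaluate those polynomials at the points $s_1,\dots,s_n$. The one substantial input is the classical Beurling--Selberg lemma: for every interval $A\subset\T$ and every integer $m\ge 1$ there are real trigonometric polynomials $\psi_m^-,\psi_m^+$, both of degree at most $m$, with
$$
\psi_m^-\le \mathbf 1_A\le \psi_m^+\quad\text{on }\T,\qquad \int_\T\psi_m^\pm=\mu(A)\pm\frac1{m+1}.
$$
Because $\psi_m^+-\mathbf 1_A$ and $\mathbf 1_A-\psi_m^-$ are nonnegative of integral $\frac1{m+1}$, each of their Fourier coefficients has modulus at most $\frac1{m+1}$; hence $\widehat{\psi_m^\pm}(k)$ lies within $\frac1{m+1}$ of $\widehat{\mathbf 1_A}(k)$, and since $|\widehat{\mathbf 1_A}(k)|\le\frac1{\pi|k|}$ we get $|\widehat{\psi_m^\pm}(k)|\le\frac1{m+1}+\frac1{\pi|k|}$ for $k\ne 0$. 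Granting this, put $N=\#\{1\le j\le n:s_j\in A\}$; then
$$
\frac Nn=\frac1n\sum_{j=1}^n\mathbf 1_A(s_j)\le\frac1n\sum_{j=1}^n\psi_m^+(s_j)=\sum_{|k|\le m}\widehat{\psi_m^+}(k)\,\frac1n\sum_{j=1}^n e^{2i\pi ks_j},
$$
the interchange being legitimate since the $k$-sum is finite. The term $k=0$ equals $\int_\T\psi_m^+=\mu(A)+\frac1{m+1}$; for $1\le|k|\le m$ one has $\frac1{m+1}\le\frac1{|k|}$, so $|\widehat{\psi_m^+}(k)|\ll\frac1{|k|}$, and since $k$ and $-k$ contribute complex conjugates of equal modulus the remaining terms are $\le\frac Cn\sum_{k=1}^m\frac1k\bigl|\sum_{j=1}^n e^{2i\pi ks_j}\bigr|$ with $C$ absolute. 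This is exactly the asserted upper bound for $\frac Nn-\mu(A)$; the identical computation with $\psi_m^-$ gives the matching lower bound, and the two together prove the theorem.

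The real obstacle is the Beurling--Selberg lemma itself; everything else is bookkeeping. One starts from Beurling's function $B$, the entire function of exponential type $2\pi$ that majorizes $\mathrm{sgn}$ on $\R$ and minimizes $\int_\R(B-\mathrm{sgn})$, the minimum being $1$ (and $-B(-\,\cdot\,)$ is the corresponding minorant of $\mathrm{sgn}$). For $A=(\alpha,\beta)$ one dilates $B$ appropriately and averages the two translates $x\mapsto B(c(x-\alpha))$ and $x\mapsto B(c(\beta-x))$ to obtain a majorant of $\mathbf 1_A$ on $\R$ whose excess integral is exactly $\frac1{m+1}$; periodizing over $\Z$ via Poisson summation, with the dilation chosen so that exponential type becomes degree, delivers $\psi_m^+$ of degree at most $m$ with the stated mean (the Fourier bound then being automatic from the positivity used above), and $\psi_m^-$ is built symmetrically. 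The technical content is carrying this out carefully: verifying the extremal value $\frac1{m+1}$, pinning down the dilation factor so that the degree is $\le m$ (this is the only place the exact constant $\frac1{m+1}$ rather than $1$ enters), and disposing of the endpoint frequencies. It goes through uniformly for every interval, including degenerate ones (a point, where $\psi_m^+$ is a scaled Fej\'er kernel), so no case analysis on $\mu(A)$ is required.

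One cannot quite avoid this extremal construction while keeping the sharp error $\frac1m$ on the right. The natural cheap substitute --- namely $(1+\varepsilon_m)$ times the convolution of a slightly enlarged $\mathbf 1_A$ with a nonnegative Fej\'er or Jackson kernel of degree $\le m$, with $\varepsilon_m$ chosen to absorb the small deficit of the convolution on $A$ --- satisfies the same bound $|\widehat{\psi_m^+}(k)|\ll\frac1{|k|}$ at once (since $|\widehat{g}(k)|\le\frac1{\pi|k|}$ for the enlarged trapezoid $g$ and $|\widehat K(k)|\le 1$ for the kernel), but the tails of the kernel cap its excess integral at $m^{-1/2}$ for a Fej\'er kernel and $m^{-3/4}$ for a Jackson kernel, which would only yield the weaker error $m^{-1/2}$ on the right-hand side. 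It is the extremality of Beurling's function, together with Selberg's matching lower bound showing that no polynomial majorant of an interval does better than $\frac1{m+1}$, that produces precisely the sharp $\frac1m$.
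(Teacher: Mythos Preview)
The paper does not prove this statement at all: Theorem~\ref{th:ET} is simply quoted as a known result, with a citation to Erd\H{o}s and Tur\'an \cite[Theorem III]{ET}. So there is nothing to compare on the paper's side.

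Your proof via Beurling--Selberg majorants is correct and is in fact the standard modern route to the Erd\H{o}s--Tur\'an inequality (see e.g.\ Montgomery's \emph{Ten Lectures} or Vaaler's exposition). The key steps --- sandwiching $\mathbf 1_A$ between degree-$m$ trigonometric polynomials with mean $\mu(A)\pm\frac1{m+1}$, bounding the nonzero Fourier coefficients by $O(1/|k|)$ via the positivity of $\psi_m^\pm\mp\mathbf 1_A$, and then reading off the discrepancy bound --- are all sound. Your remarks on why Fej\'er or Jackson kernels would only give $m^{-1/2}$ or $m^{-3/4}$ are accurate and a nice touch. For the record, the original 1948 argument of Erd\H{o}s and Tur\'an did not use Beurling's extremal function (which was not yet published); they built suitable majorizing polynomials by hand, so your approach is a genuine improvement in elegance over the historical proof, though not over anything in the present paper.
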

Applying this with $s_j=\{\theta j\}$ for some
irrational number $\theta$ and
using the standard exponential sum bound
$$
\abs{\sum_{j=1}^m e^{2i\pi j\theta}}\leq \frac{1}{2\nor{\theta}},
$$
where $\nor{\theta}=\min_{k\in\mathbb{Z}}|\theta -k|$,
we obtain
$$
\abs{\frac{1}{n}B_{\theta,A}(n) - \mu(A)}\ll
\frac{1}{m}+\frac{1}{n}\sum_{k=1}^m\frac{1}{k\nor{\theta k}}.
$$
The series  $\sum_{k=1}^m\frac{1}{k\nor{\theta k}}$
diverges as $m$ tends to infinity, but selecting $m=m(n)$
as a  sufficiently slowly increasing function of $n$,
one may achieve
$$
\frac{1}{n}\sum_{k=1}^{m(n)}\frac{1}{k\nor{\theta k}}\asymp \frac{1}{m(n)}\rightarrow 0
$$
as $n$ tends to infinity,
and thus
there exists a function $\eta : \N\rightarrow\R_+$ (depending on $\theta$ only) that tends to zero such that
\begin{equation}
\label{eq:Weyl}
\abs{\frac{1}{n}B_{\theta,A}(n) - \mu(A)}\leq \eta(n).
\end{equation}
Note that the bound \eqref{eq:Weyl} is uniform in $A$; in particular, it is still valuable if $A$ is replaced by a sequence $A_n$ of intervals
 of sufficiently slowly decaying measure ({e.g. $\mu(A_n)\geq 2\eta(n)$).
 Also we note that using
 the sequence $s_j=\pft{(j+X)}$,
 we may obtain the more general bound
 \begin{equation}
\label{eq:Weyl2}
\abs{\frac{1}{n}(B_{\theta,A}(n+X)-B_{\theta,A}(X)) - \mu(A)}\leq \eta(n)
\end{equation}
for any integers $X$ and $n$.

%
%

\medskip
We now start the proof of Proposition \ref{prop:specialcase} in the case $k=1$.
We will adopt a probabilistic construction.
Let $\theta$ be an irrational number and $\eta$ be a function for which equation \eqref{eq:Weyl} holds and
$$
X_\theta=\Big\{n\in\mathbb{N}\,:\, 2\eta(n/2)<\{\theta n\} <1-2\eta(n/2)\Big\}.
$$
Equation \eqref{eq:Weyl} and the ensuing remarks imply that
\begin{equation}\label{eqdensX}
\dens{X_\theta}=1.
\end{equation}

We now define our  desired random sequence $A$. Let $(\xi_k)_{k\ge 1}$ be a sequence of mutually  independent Bernoulli random variables such that
$$
P(\xi_k=1)=\beta_k,\quad k\ge1
$$
where $\beta_k$ is the constant sequence equal to $\beta$
if $\beta >0$ and  the decaying sequence $k^{-1/5}$ if $\beta=0$.
Let $A$ be the random sequence consisting of the integers $k\in T_{1,\theta}$ such that $\xi_k=1$. It is easy to 
see that the  density of $A$ inside $T_{1,\theta}$  satisfies $\mathrm{d}_{T_{1,\theta}}(A)=\beta$ almost surely as required.

Now we prove that $A+A\supset X_\theta\setminus F$, 
where $F$ is almost surely a finite set. This would imply that 
$\dens{A+A}=1$, as desired.
Let $n\in X(\theta)$.
We define 
$$
K_n=\{0<k<n/2\,:\, k\in T_{1,\theta}\cap(n-T_{1,\theta})\},
$$
and
$$
R(n)=\sum_{k\in K_n}\xi_k\xi_{n-k}.
$$
Then by the independence of the $\xi_k$'s
\begin{equation}\label{eqRn0}
P(R(n)=0)=\prod_{k\in K_n}P(\xi_k\xi_{n-k}=0)\leq (1-\beta_n^2)^{|K_n|}\leq \exp(-\abs{K_n}\beta_n^2).
\end{equation}
We now estimate   $\abs{K_n}$ from below. 
By definition $k< n/2$ belongs to $K_n$
if and only if $\{\theta k\}<1/2$ and $\{\theta(n-k)\}<1/2$.

Let $ I=(2\eta(n/2),1/2)$.
Since $n\in X_\theta$, we have $\{\theta n\}\in I\cup (1-I)$.
Suppose for instance 
$\{\theta n\}\in I$, the case
$\{\theta n\}\in 1-I$ being similar. 
Then for any $k$ such that $\{\theta k\}<\pft{n}<1/2$, we have 
$\pft{(n-k)}=\pft{n}-\pft{k}<1/2$.
Thus $k\in K_n$. This means that
$$K_n\supset \left\{0<k<n/2 : \pft{k}<\pft{n}\right\},$$
whence $\abs{K_n}\geq n/2 (\pft{n}-\eta(n/2))\geq \frac{n}{2}\eta(n/2)$  by equation \eqref{eq:Weyl}.
If $\pft{n}\in 1-I=(1/2,1-2\eta(n))$ instead,
it suffices to replace the condition $\pft{k}<\pft{n}$
by $\frac{1}{2}-\pft{k}<1-\pft{n}$ to obtain the same result.

One can choose $\eta(n)$ to be arbitrarily slowly decaying, say $\eta(n)\geq n^{-1/2}$.
This way $\abs{K_n}\gg \sqrt{n}$, so that $\beta_n^2\abs{K_n}\gg n^{1/10}$ and 
 from \eqref{eqRn0} we get
$$
\sum_{n\in X_\theta}P(R(n)=0)<\infty.
$$
We conclude by the Borel-Cantelli lemma (cf. \cite[Lemma 1.2]{TV}) that almost surely, all but finitely many integers of $X_\theta$ are sums of $2$ terms from the random sequence $A$. The result follows from \eqref{eqdensX}. This finishes the proof of Proposition \ref{prop:specialcase} in the case where $k=1$,
and thus 
of Theorem \ref{deuxDensites}.

\bigskip

We now determine which pairs
$(\alpha,\beta)\in \R^2$ with
 $0<\alpha\le \beta<2\alpha$ belong to $\D_2$, that is, we prove Theorem \ref{rationnel}.

Let $A\subseteq\mathbb{N}$ such that $\beta=\dens{2A}<2\dens{A}=2\alpha$. By Kneser's theorem for infinite sequences, there exists a (minimal) positive integer $g$ such that $\ (2A+g\mathbb{N})\setminus2A$ is finite and
$$
\dens{2A}\ge2\dens{A}-\frac1g.
$$
Let
\begin{align*}
A_g&=\{\overline{x}=x+g\mathbb{Z}\in \mathbb{Z}/g\mathbb{Z}\,:\, \overline{x}\cap A\ne\varnothing\},\\
A'_g&=\{\overline{x}\,:\, |\overline{x}\cap A|=\infty\},\\
A''_g&=\{\overline{x}\,:\, 0<|\overline{x}\cap A|<\infty\}.
\end{align*}
We have $A_g=A'_g\cup A''_g$. Let
$$
\widetilde{A}=\bigcup_{\overline{x}\in A'_g}(x+g\mathbb{N})\cup\{x\,:\, \overline{x}\in A''_g\}.
$$
Then
$$
\dens{A}\le \dens{\widetilde{A}}=\frac{|A'_g|}{g},\quad
\dens{2A}=\frac{|A_g+A'_g|}{g}.
$$
Since $g$ is minimal we have $|A_g+A'_g|\ge|A_g|+|A'_g|-1$ since otherwise $A_g+A'_g$ has a nontrivial 
period. Hence from $\dens{2A}<2\dens{A}$ we get $|A_g|+|A'_g|-1 <2|A'_g|$ giving $A'_g=A_g$ and finally $|2A_g|=2|A_g|-1$. 

Let $r=|A_g|$. Then $\beta=\frac{2r-1}{g}$ with $1\le r\le\frac{g+1}2$. We get
$$
\frac{\beta}2<\alpha\le \frac rg=\frac{\beta}2+\frac1{2g}.
$$
We proved the following.
\begin{prop}
Let $A$ such that $\dens{2A}<2\dens{A}$. Then there exist two positive integers $g$ and $r\le \frac{g+1}2$ such  that
$$
\dens{2A}=\frac{2r-1}g\quad\text{and}\quad \frac{\dens{2A}}2<\dens{A}\le \frac{\dens{2A}}2+\frac1{2g}.
$$
\end{prop}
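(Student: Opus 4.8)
The plan is to run Kneser's theorem for infinite sequences on the sumset $2A$, extract a modulus $g$ that witnesses the density deficit $\beta<2\alpha$, and then push the problem into the finite cyclic group $\Z/g\Z$, where it becomes a statement about sets of very small doubling.

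Concretely, I would first invoke Kneser's theorem in the form: since $\dens{2A}<2\dens{A}$, there is a least positive integer $g$ such that $(2A+g\N)\setminus 2A$ is finite and $\dens{2A}\ge 2\dens{A}-1/g$. Write $\alpha=\dens{A}$, $\beta=\dens{2A}$. Next I would record the residues hit by $A$ modulo $g$: let $A_g\subseteq\Z/g\Z$ be the image of $A$, let $A'_g\subseteq A_g$ consist of the classes met infinitely often by $A$, and $A''_g=A_g\setminus A'_g$ of those met only finitely often. Completing each class in $A'_g$ to a full one-sided progression $x+g\N$ and discarding the finitely many stray points of $A''_g$ yields a set whose density is $|A'_g|/g\ge\alpha$ and whose sumset agrees with the union of classes $A_g+A'_g$ up to a finite set; since $2A$ itself agrees with a union of classes mod $g$ up to a finite set (the defining property of the Kneser modulus), we get $\alpha\le |A'_g|/g$ and $\beta=|A_g+A'_g|/g$.

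The core of the argument is then the interplay of Kneser's inequality in $\Z/g\Z$ with the minimality of $g$. If $A_g+A'_g$ were a union of cosets of a nontrivial subgroup $H\le\Z/g\Z$, then $2A$ would agree up to a finite set with a union of residue classes modulo $g/|H|<g$, contradicting minimality; hence $|A_g+A'_g|\ge|A_g|+|A'_g|-1$. Feeding this into $\beta<2\alpha\le 2|A'_g|/g$ gives $|A_g|+|A'_g|-1<2|A'_g|$, i.e. $|A'_g|\ge|A_g|$, so $A'_g=A_g$ and $A_g+A'_g=2A_g$; then $|2A_g|=g\beta<2g\alpha\le 2|A_g|$ forces $|2A_g|\le 2|A_g|-1$, which together with the Kneser bound yields $|2A_g|=2|A_g|-1$. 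Setting $r=|A_g|$ we obtain $\beta=(2r-1)/g$, while $r\le(g+1)/2$ follows from $\beta\le 1$. The density bounds drop out: $\alpha\le|A_g|/g=r/g=\beta/2+1/(2g)$, and $\alpha>\beta/2$ is exactly the hypothesis $\beta<2\alpha$.

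The step I expect to be the main obstacle is the bookkeeping linking the finite cyclic picture back to $\N$: one must check that the classes met only finitely often can be ignored, that ``saturating'' the classes in $A'_g$ really preserves $2A$ up to a finite set, and---most delicately---that periodicity of the $\Z/g\Z$-sumset $A_g+A'_g$ genuinely produces a strictly smaller admissible modulus for $2A$, so that the minimality of $g$ can legitimately be invoked. Once this translation is nailed down, everything else is Kneser's inequality and its equality case in $\Z/g\Z$, which is routine.
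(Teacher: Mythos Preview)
Your proposal is correct and follows essentially the same route as the paper: invoke Kneser's theorem to obtain a minimal modulus $g$, pass to the residues $A_g\supseteq A'_g$ in $\Z/g\Z$, use minimality of $g$ to rule out a nontrivial period of $A_g+A'_g$ and hence get $|A_g+A'_g|\ge|A_g|+|A'_g|-1$, then combine with $\beta<2\alpha\le 2|A'_g|/g$ to force $A'_g=A_g$ and $|2A_g|=2|A_g|-1$. The paper is slightly terser on the bookkeeping you flag (the role of $A''_g$ and why periodicity contradicts minimality), but the argument is identical in substance.
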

Conversely, let $\beta\in [0,1]\cap \Q$ have nonpositive dyadic
valuation, and $g$ be the smallest positive integer for which $g\beta$ is odd, thus $\beta=\frac{2r-1}{g}$ and let 
$\alpha$ satisfy $$\frac{\beta}2<\alpha\le \frac{\beta}2+\frac1{2g}=\frac{r}{g}$$
Then let $R=\{0,\ldots,r-1\}\in \Z/g\Z$, so that $\abs{2R}=2r-1$.
Let $\gamma=\alpha \frac{g}{r}$, thus $\gamma\in (0,1]$.
Take $A_g\subset g\N$ constructed in the proof of Proposition \ref{prop:specialcase} (with $k=1$), so that
$\dens{A_g}=\gamma/g$ and
$\dens{2A_g}=1/g$ and let
$A=\cup_{x\in R}x+A_g$, which has density $\alpha$.
Consequently $2A=\cup_{x\in 2R}x+g\Z$, which yields $\dens{2A}=\beta$ as desired.

This completes the proof of Theorem \ref{rationnel}.

%

\section{\bf Measures of sumsets in the circle}
\subsection{Twofold sumsets}
To start with, we show that in order to achieve a large ratio 
$\mu(2A)/\mu(A)$, a large number of connected components will be necessary.
\begin{prop}
Let $A$ be a disjoint union of $k$ intervals.
Then $\mu(2A)\leq (k+1)\mu(A)$. If the intervals are open, the equality case happens when all the $\binom{k+1}{2}$ intervals of the sum
are pairwise disjoint.
\end{prop}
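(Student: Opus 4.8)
The plan is to write $A=\bigsqcup_{i=1}^k I_i$ as a disjoint union of intervals and observe that $2A=\bigcup_{1\le i\le j\le k}(I_i+I_j)$. Each sum $I_i+I_j$ is an interval (or all of $\T$) of length at most $\mu(I_i)+\mu(I_j)$, with equality in the sense $\mu(I_i+I_j)=\mu(I_i)+\mu(I_j)$ precisely when this sum has not yet wrapped all the way around the circle. Summing the subadditivity bound $\mu(2A)\le \sum_{1\le i\le j\le k}\mu(I_i+I_j)$ over all pairs, and counting that each index $i\in\{1,\dots,k\}$ appears in exactly $k+1$ of the sets $\{i,j\}$ with $i\le j$ (namely $k-1$ pairs $\{i,j\}$ with $i\neq j$, plus the diagonal pair $\{i,i\}$ counted with multiplicity two because $\mu(I_i+I_i)\le 2\mu(I_i)$), one gets $\mu(2A)\le\sum_{1\le i\le j\le k}(\mu(I_i)+\mu(I_j))=(k+1)\sum_{i=1}^k\mu(I_i)=(k+1)\mu(A)$.

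For the equality case, first I would note that if the intervals $I_i$ are open then $2A$ is open and $\mu$ is just Lebesgue measure, so the only losses in the above chain of inequalities come from (i) the subadditivity of $\mu$ over the union defining $2A$, which is an equality exactly when the $\binom{k+1}{2}$ sets $I_i+I_j$ are pairwise disjoint (up to measure zero, but since they are open, genuinely pairwise disjoint after discarding the degenerate situation), and (ii) the bound $\mu(I_i+I_j)\le\mu(I_i)+\mu(I_j)$, which is an equality iff $I_i+I_j\ne\T$. So if all $\binom{k+1}{2}$ intervals $I_i+I_j$ are pairwise disjoint, then in particular none of them is all of $\T$ (as $k\ge 2$ forces at least one other nonempty interval to coexist with it — and if $k=1$ the statement $\mu(2A)\le 2\mu(A)$ with equality for $I_1+I_1\ne\T$ is immediate), hence both (i) and (ii) are equalities and $\mu(2A)=(k+1)\mu(A)$. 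Conversely, if equality $\mu(2A)=(k+1)\mu(A)$ holds then both (i) and (ii) must be equalities, so the $I_i+I_j$ are pairwise disjoint.

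The main subtlety to watch is the bookkeeping of the diagonal terms: a clean way is to formally write $2A=\bigcup_{1\le i\le j\le k}(I_i+I_j)$ and treat $I_i+I_i$ on the same footing as the off-diagonal sums, using $\mu(I_i+I_i)\le 2\mu(I_i)$ (equality iff $2I_i\neq\T$), so that the weight of $\mu(I_i)$ in the final sum is $(k-1)\cdot 1 + 1\cdot 2 = k+1$. I would also flag that ``pairwise disjoint'' should be interpreted for the $\binom{k+1}{2}$ intervals $I_i+I_i$ ($1\le i\le k$) and $I_i+I_j$ ($i<j$), i.e. the combinatorially natural multiset of pairs from a $(k+1)$-element index set — which matches the statement's count $\binom{k+1}{2}$ — and that the degenerate case where some $I_i$ is empty can be excluded at the outset (or absorbed, since an empty interval contributes nothing). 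This is really the only point where care is needed; everything else is the two elementary facts that a Minkowski sum of two arcs is an arc of length the sum of the two lengths until it fills the circle, and that measure is subadditive with equality for disjoint open sets.
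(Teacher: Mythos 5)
Your proposal is correct and follows essentially the same route as the paper: decompose $2A=\bigcup_{i\le j}(I_i+I_j)$, bound $\mu(I_i+I_j)\le\mu(I_i)+\mu(I_j)$, and count that each $m_i$ appears with weight $k+1$. Your treatment is a bit more careful than the paper's (which flatly writes $\mu(I_i+I_j)=m_i+m_j$ and dismisses the equality discussion as ``clear''), in particular flagging the wrap-around caveat and the $k=1$ edge case, but the argument is the same.
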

\begin{proof}
Let $A=\bigcup_{j=1}^kI_j$. So $A+A=\bigcup_{i\leq j} (I_i+I_j)$.
Let $\mu(I_i)=m_i$, so $\mu(I_i+I_j)=m_i+m_j$ and
$\mu(A+A)\leq \sum_{i\leq j}(m_i+m_j)=(k+1)\sum_i m_i$.
The equality case is clear.
\end{proof}
We now attempt to prove the first item of Theorem \ref{deuxDensites} in the case $\alpha >0$.
Let $(\alpha,\beta)\in (0,1]^2$ satisfy $\beta \geq \min (2\alpha,1)$.
If $\beta=\min(2\alpha,1)$, the set $A=(0,\alpha)$ satisfies
$\mu(A)=\alpha,\mu(2A)=\beta$.
So we now suppose $0<\alpha<1/2$ and $\beta >2\alpha$.

First, note that for any $k$,
if
 $A=[0,\ell]\cup \{2\ell\}\cup \cdots \cup \{(k-1)\ell\}$, then $A+A=[0,k\ell]$ so we can achieve a duplication ratio $\mu(2A)/\mu(A)=k$.
 The idea is then to somewhat
``thicken'' the singletons, in order to reduce the duplication ratio of the set.

Let $k=\lfloor \beta/\alpha \rfloor$, thus $k\leq \beta/\alpha < k+1$ and $k\geq 2$.

Then let $A=(0,x)\cup (\{2x,\ldots,kx\}+(-\epsilon,0))$,
for some $x\leq \alpha$ and $\epsilon \leq x/2$ to be determined later.
Note that $$A+A=(0,(k+1)x)\cup (\{(k+2)x,\ldots,2kx\}+
(-2\epsilon,0)).$$
Thus $\mu(A)=x+(k-1)\epsilon$
and $\mu(2A)=(k+1)x+2(k-1)\epsilon$.
The doubling ratio is therefore 
$$
f(\epsilon/x)=\frac{(k+1)x+2(k-1)\epsilon}{x+(k-1)\epsilon}=
(k+1)\frac{1+2\frac{k-1}{k+1}\frac{\epsilon}{x}}{1+(k-1)\frac{\epsilon}{x}}.
$$
We have $f(0)=k+1$ and while $f(1/2)=4k/(k+1)\leq k$.
Therefore by continuity of $f$, there is a value of the ratio $y=\epsilon/x$ for which the doubling ratio is the desired $\beta/\alpha$.

Then there remains to pick $x$ such that $\alpha=x+(k-1)\epsilon=x(1+(k-1)y)$,
namely $x=\frac{\alpha}{1+(k-1)y}$, and then the corresponding $\epsilon$.

In the case $\alpha=0$, a radically different construction will be necessary. Let $C\subset [0,1]$ be the classical ternary Cantor set.
It is well known that $C+C=[0,2]$. 
For the sake of completeness, we reproduce a short proof.
It suffices to prove $C+C\supset [0,2]$. 
Let $u\in [0,2]$ and let $(\epsilon_i)_{i\geq 1}\in \{0,1,2\}^{\N\setminus\{0\}}$ be the digits of $u/2$ in its ternary expression,
thus
$$
u/2=\sup_{i\ge1}\epsilon_i3^{-i}.
$$
We construct sequences $\alpha$ and $\beta$ in $\{0,2\}^{\N\setminus\{0\}}$
 such a way that
for each $i\geq 1$, we have
$\alpha_i+\beta_i=2\epsilon_i$. Thus
if $\epsilon_i=0$ we take $\alpha_i=\beta_i=0$;
if $\epsilon_i=1$ we define $\alpha_i=0$ and $\beta_i=2$;
otherwise $\alpha_i=\beta_i=2$.
Letting $x=\sum_{i\geq 1}\alpha_i3^{-i}$ and $y=\sum_{i\geq 1}\beta_i3^{-i}$, we see that $x$ and $y$ are in $C$
and $x+y=2 \cdot u/2=u$, which concludes.
 
Scaling $C$ it by a factor $\beta/2$
and projecting it to the circle,
we obtain a set $A=(\beta/2)C$ of measure 0 such that $\mu(2A)=\beta$.

%
%
%
%
%
\subsection{Threefold sumsets}
First we prove Theorem \ref{th:Gy}.
We will derive it from the following theorem of Gyarmati, Konyagin and Ruzsa \cite{Gy}.
\begin{prop}
\label{prop:Gy}
There exists an absolute constant $c>0$ such that the following holds.
Let $p\geq 29$ be a prime. Let $A\subset \Z/p\Z$
and
let $(n,s)=(\abs{2A},\abs{3A})$.
If $n<cp$, then $s\geq \frac{3n-1}{2}$.
\end{prop}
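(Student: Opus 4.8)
\textit{The plan.} Since this is the theorem of Gyarmati, Konyagin and Ruzsa cited from \cite{Gy}, I only sketch how I would establish it. The natural route is to reduce it to the integer analogue --- Lev's inequality $2\,|3A'|\ge 3\,|2A'|-1$ for finite $A'\subset\Z$ --- by rectification, the hypotheses $p\ge 29$ and $n<cp$ being precisely what makes that reduction valid. First, dispose of the trivial case: if $|3A|\ge p/3$, then taking $c\le 2/9$ gives $\frac{3n-1}{2}<\frac{3cp}{2}\le p/3\le|3A|$ and we are done. So assume $|3A|<p/3$; as $3A\supseteq 2A+a_0$ and $2A\supseteq A+a_0$ for any $a_0\in A$, this forces $|A|\le|2A|\le|3A|<p/3$. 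Now invoke a rectification principle in $\Z/p\Z$: under these size constraints $A$ is Freiman $3$-isomorphic to a finite set $A'\subset\Z$, and such an isomorphism preserves $|iA|=|iA'|$ for $i=1,2,3$. It therefore suffices to prove $|3A'|\ge\frac{3\,|2A'|-1}{2}$ for finite sets of integers.

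\textit{The integer step.} Write $n=|2A'|$ and, translating if necessary, assume $\min A'=0$; put $D=\max A'$, so $A'\subseteq\{0,\dots,D\}$ with $0,D\in A'$. Then $3A'\supseteq 2A'\cup(2A'+D)$, so
\[
|3A'|\ \ge\ 2n-\big|\,2A'\cap(2A'+D)\,\big| .
\]
Set $J=2A'\cap(2A'+D)$. For $x\in J$ we have $x\in 2A'\subseteq[0,2D]$ and $x-D\in 2A'\subseteq[0,2D]$, hence $x\in[D,2D]$; thus $J\subseteq 2A'\cap[D,2D]$, and since $x\mapsto x-D$ embeds $J$ into $2A'\cap[0,D]$ we also get $|J|\le|2A'\cap[0,D]|$. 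As $2A'$ is covered by the two intervals $[0,D]$ and $[D,2D]$, which meet only at $D$, we have $|2A'\cap[0,D]|+|2A'\cap[D,2D]|\le n+1$, so the smaller of the two, hence $|J|$, is at most $\frac{n+1}{2}$. Consequently $|3A'|\ge 2n-\frac{n+1}{2}=\frac{3n-1}{2}$, as required.

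\textit{The main obstacle.} The delicate point is the rectification step --- showing that every $A\subset\Z/p\Z$ not already covered by the trivial case is Freiman $3$-isomorphic to an integer set; pinning down the correct hypothesis for this is exactly what ``$n<cp$ with $c$ an absolute constant'' encodes, the threshold $p\ge29$ then absorbing the small primes. I would split into two cases. If $|2A|\le 3|A|-4$, the ``$3k-4$'' theorem in $\Z/p\Z$ --- applicable because $|2A|\le|3A|<p/3$ --- confines $A$ to an arithmetic progression of length $|2A|-|A|+1\le|2A|$; rescaling to common difference $1$, the inequality $3\,|2A|<p$ prevents any wrap-around and yields the desired Freiman $3$-isomorphism. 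In the complementary range $|2A|\ge 3|A|-3$, Cauchy--Davenport only gives $|3A|\ge|A|+|2A|-1$, which falls short of $\frac{3n-1}{2}$, so a sharper input is needed: a Vosper-type stability statement for the sumset $A+2A$ (if $|3A|$ were too small it would force $A$ and $2A$ into progressions with a common difference, hence $A$ into a short progression, contradicting $|2A|\ge3|A|-3$), supplemented by growth estimates when $A$ is very spread out. I expect the bulk of the work --- and the calibration of $c$ --- to lie in this second case; see \cite{Gy} for the complete argument.
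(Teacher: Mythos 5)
The paper offers no proof of this proposition: it is quoted verbatim from Gyarmati--Konyagin--Ruzsa \cite{Gy} and used as a black box to deduce Theorem \ref{th:Gy}. So your decision to lean on \cite{Gy} for the hard part matches the paper's approach in spirit, and your \emph{integer step} is correct and complete: the covering $3A'\supseteq 2A'\cup(2A'+D)$ together with the observation that $J=2A'\cap(2A'+D)$ injects into both $2A'\cap[0,D]$ and $2A'\cap[D,2D]$ is precisely the standard proof of the inequality $2\abs{3A'}\geq 3\abs{2A'}-1$ that the paper records (also without proof) in its introduction. The disposal of the case $\abs{3A}\geq p/3$ and the no-wrap-around remark ($3\abs{2A}<p$ makes the rescaled progression map a Freiman $3$-isomorphism) are likewise fine.

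The genuine gap is the rectification. In the range $\abs{2A}\geq 3\abs{A}-3$ your text is a wish rather than an argument: Cauchy--Davenport gives $\abs{3A}\geq\abs{A}+\abs{2A}-1$, which meets the target $\frac{3n-1}{2}$ only when $\abs{2A}\leq 2\abs{A}-1$, and the ``Vosper-type stability for $A+2A$'' you invoke to handle $2\abs{A}\leq\abs{2A}$ together with $\abs{2A}\geq 3\abs{A}-3$ is exactly the content of \cite{Gy}; nothing in your sketch supplies it. Moreover, even your first case overstates what is available: a $3k-4$ theorem in $\Z/p\Z$ valid in the whole range $\abs{2A}\leq 3\abs{A}-4$ under a mere density hypothesis is (as of this writing) a conjecture, not a theorem. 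The rectification statements one can actually quote (Freiman's $2.4$-theorem and its refinements) require roughly $\abs{2A}\lesssim 2.4\abs{A}$, so the boundary between your two cases cannot be drawn at $3\abs{A}-4$, and the middle range $2.4\abs{A}\lesssim\abs{2A}\leq 3\abs{A}-4$ also falls into the unproved case. As written, your argument establishes the proposition only in the regime where $A$ genuinely rectifies; everything else is deferred to the reference --- which is what the paper does, more candidly, by not offering a proof at all.
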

We derive the analogous result for measures in the circle
by a standard method.
We first prove Theorem \ref{th:Gy} for simple sets,
that is, the  union of finitely many closed intervals.
Let $A\subset \T$ be a simple set.
Let $c$ be the constant given by Proposition~\ref{prop:Gy} and
suppose that $\mu(2A)<c
$. 
Let $p\geq 29$ be a prime, that we will let tend to infinity ultimately.
Let
$$
A(p)=\left\{j\in\Z/p\Z : \frac{j}{p}\in A\right\}.
$$
This notation should not conflict with the notation $A(t)$
defined in the introduction.
One may check that $\abs{A(p)}=p\mu(A)+O(1)$
as $p$ tends to infinity.
Further note that
$(kA)(p)=kA(p)$ for any $k\in \N$.
Since $2A$ and $3A$ are simple, one has
$\abs{(kA)(p)}=p\mu(kA)+O(1)$ for $k=2,3$; 
thus
we have $\abs{(2A)(p)}<cp$ for
$p$ sufficiently large, so we can apply
Proposition \ref{prop:Gy} and
 conclude in the case of simple sets.
 
 Now if $A$ is closed (that is, compact),
 writing $I_\delta=(-\delta,\delta)$,
 we have $A=\bigcap_{\delta>0}(A+I_\delta)$,
 in fact
 $kA=\bigcap_{\delta>0}(kA+I_{k\delta})$
 for any integer $k\geq 1$.
 So for any fixed $\epsilon>0$, we can chose $\delta$ such that
 $\mu(kA+I_{k\delta})\leq \mu(kA)+\epsilon$.
 Further, by compacity, there exists a simple set $A'$
 (the union of finitely many translates of $I_\delta$)
 such that $A\subset A'\subset A+I_\delta$.
 We have
 $$\mu(3A)\geq \mu(3A')-\epsilon\geq \frac32\mu(2A)-\epsilon.$$
 Letting $\epsilon$ tend to zero, we conclude
 the proof of Theorem \ref{th:Gy}.


We prove Theorem \ref{triplets}.
If $\alpha\geq 1/3$, the triplets $(\alpha,\beta,\gamma)$ that
belong to $\E_k$ are the ones for which $\beta \geq \min(1,2\alpha)$ and $\gamma=1$.

We now consider triplets where $\alpha<1/3$; we prove the following proposition, which implies Theorem \ref{triplets}.
\begin{prop}
The set of triplets $(\mu(A),\mu(2A),\mu(3A))$ for sets 
$A\subset [0,1/3]\subset \T$ having at most two connected components is 
$$
\{(\alpha,\beta,\gamma)\in [0,1]^3 : \beta\in [2\alpha,3\alpha],\gamma\in [3\beta/2,2\beta-\alpha) \text{ or } \beta=3\alpha,
\gamma\in [3\beta /2,2\beta]\}.
$$
\end{prop}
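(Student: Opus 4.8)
The plan is to parametrize $A$ as a union of at most two intervals and compute the measures of $A$, $2A$, $3A$ directly, then optimize over the placement of the intervals to see which triples $(\alpha,\beta,\gamma)$ arise. First I would handle the degenerate case: if $A$ is a single interval $I$ of length $\alpha$ (and $\alpha\le 1/3$), then $2A=2I$ has length $2\alpha$ and $3A=3I$ has length $3\alpha$, giving exactly the ``boundary'' points $\beta=2\alpha$, $\gamma=3\alpha=3\beta/2$. This already shows the line $\{\beta=2\alpha,\ \gamma=3\beta/2\}$ is covered and is also (by the lower-bound half, see below) the only thing achievable when $\beta=2\alpha$.

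Next, for the generic case, write $A=I_1\cup I_2$ with $I_j$ an interval of length $m_j$, $m_1+m_2=\alpha$, and let $d$ denote the ``gap parameter'' controlling how far apart the two intervals sit. Since $A\subset[0,1/3]$, all of $2A\subset[0,2/3]$ and $3A\subset[0,1]$, so no wraparound identifications occur and the sumsets are genuine unions of intervals in $\R$. Then $2A=2I_1\cup(I_1+I_2)\cup 2I_2$ is a union of (at most three) intervals of lengths $2m_1,\ m_1+m_2,\ 2m_2$, and $3A=3I_1\cup(2I_1+I_2)\cup(I_1+2I_2)\cup 3I_2$ is a union of (at most four) intervals of lengths $3m_1,\ 2m_1+m_2,\ m_1+2m_2,\ 3m_2$. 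The measures $\beta=\mu(2A)$ and $\gamma=\mu(3A)$ are then piecewise-linear functions of the relative positions, determined entirely by which of these constituent intervals overlap. I would enumerate the overlap patterns: the two extreme regimes are (i) all constituent intervals pairwise disjoint, giving $\beta=(m_1+m_2)+2m_1+2m_2=3\alpha$ and $\gamma=3m_1+3m_2+2m_1+m_2+m_1+2m_2=6\alpha=2\beta$; and (ii) the intervals telescope into a single interval, giving back $\beta=2\alpha$, $\gamma=3\alpha$. Interpolating by continuously sliding $I_2$ relative to $I_1$ (and adjusting $m_1/m_2$) lets $\beta$ range continuously over $[2\alpha,3\alpha]$ and, for each fixed $\beta$, lets $\gamma$ range over an interval; the claim is that this interval is exactly $[3\beta/2,\,2\beta-\alpha)$ when $\beta<3\alpha$ and $[3\beta/2,2\beta]$ when $\beta=3\alpha$.

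For the lower bounds I would argue as follows. The bound $\gamma\ge 3\beta/2$ is exactly Theorem~\ref{th:Gy} restricted to these small simple sets — or, for $A$ a union of two intervals, it can be checked by hand from the interval-length bookkeeping above (the minimum of $\gamma$ over overlap patterns with $\mu(2A)=\beta$ fixed is attained when $3A$ is ``as overlapped as possible'', which forces $\gamma\ge\tfrac32\beta$). The bound $\gamma\ge\ldots$ should really be read together with the upper bound $\gamma\le 2\beta-\alpha$ (resp. $\le 2\beta$): since $3A\supseteq A+2A$ and $2A$ is, up to measure considerations, a translate-union, Raikov/Kneser-type reasoning on $\R$ gives $\mu(3A)=\mu(A+2A)\le\mu(A)+\mu(2A)+\mu(2A)-\mu(2A)$... more carefully, $\mu(A+2A)\le \mu(2A)+\mu(2A)$ only when $2A$ absorbs a translate of $A$, and the strict inequality $\gamma<2\beta-\alpha$ for $\beta<3\alpha$ reflects that when $2A$ is not ``full'' the three constituent pieces of $2A$ cannot all be separated. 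I would make this precise via the explicit interval lengths rather than invoking a general theorem.

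The main obstacle I anticipate is the case analysis for the upper bound, specifically pinning down the exact endpoint $\gamma=2\beta-\alpha$ and justifying why it is \emph{not} attained when $\beta<3\alpha$ (the half-open interval). This requires showing that achieving $\mu(2A)=\beta<3\alpha$ forces at least one overlap among the constituent intervals of $2A$, which in turn forces a corresponding overlap — quantitatively at least as large — among the constituents of $3A$, strictly lowering $\gamma$ below $2\beta$; and conversely that any $\gamma$ strictly between $3\beta/2$ and $2\beta-\alpha$ is realized by a suitable choice of $m_1,m_2$ and gap. The bookkeeping is elementary but has several sub-cases (depending on whether $m_1=m_2$, on the order of the overlaps as the gap shrinks, and on whether $\alpha<\beta/3$ forces additional coincidences), so the care is in organizing it cleanly; a good normalization is to fix $m_1=m_2=\alpha/2$ first, obtain a one-parameter family covering a curve $\gamma=\gamma(\beta)$, then perturb $m_1\ne m_2$ to fatten that curve into the claimed two-dimensional region.
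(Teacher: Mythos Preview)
Your overall strategy---parametrize $A$ by interval endpoints, write out $2A$ and $3A$ as explicit unions of intervals, and enumerate the overlap patterns---is exactly the paper's approach; there the coordinates are $A=(0,x)\cup(y,z)$ with $0\le x\le y\le z\le 1/3$, and the proof proceeds by a systematic case split on the number of connected components of $2A$ (one, two, or three) and then of $3A$.

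Two places where your plan would run into trouble. First, you cannot invoke Theorem~\ref{th:Gy} for the lower bound $\gamma\ge 3\beta/2$: that theorem carries the hypothesis $\mu(2A)<c$ for a small absolute constant, which fails for generic $A\subset[0,1/3]$. Your fallback of checking it directly from the interval bookkeeping is the only viable route, and it is what the paper does (the inequality emerges case by case rather than from a single lemma).

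Second, and more substantively, you plan to argue that the upper endpoint $\gamma=2\beta-\alpha$ is \emph{not} attained when $\beta<3\alpha$, but the paper's own case analysis shows that it \emph{is}. In the sub-case where $2A$ has two components and $3A$ also has exactly two (in the paper's coordinates: $2x>y$, $x+z<2y$, $x+2z<3y$), one gets $\gamma=x+5z-3y=2\beta-\alpha$ on the nose, and this region of parameter space is non-empty for every $\beta\in(2\alpha,3\alpha)$. So your heuristic that ``an overlap in $2A$ forces a strictly larger overlap in $3A$, pushing $\gamma$ strictly below $2\beta-\alpha$'' is false as stated. Conversely, the case analysis shows that $\gamma=3\beta/2$ is achieved only in the single-interval case $\beta=2\alpha$; for $\beta>2\alpha$ every configuration gives $\gamma>3\beta/2$ strictly. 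Carrying out the enumeration carefully will reveal that the half-open interval in the statement has the wrong endpoint open, rather than confirm the stated version---so do not try to force the argument toward non-attainment at $2\beta-\alpha$.
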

\begin{proof}
We may take $A$ of the form
$(0,x)\cup (y,z)$ for some $0\leq x\leq y\leq z\leq 1/3$.
So $A+A=(0,2x)\cup (y,x+z)\cup (2y,2z)$
and $3A=(0,3x)\cup (y,2x+z)\cup (2y,2z+x)\cup (3y,3z)$.

We are seeking for which triplets
$(\alpha,\beta,\gamma)$ the system

$$\left\lbrace
\begin{array}{c @{\, = \,} c}
\alpha & x+z-y	\\

\beta &  3\alpha - \max(0,2x-y) -\max(0,x+z-2y) \\ 
 
 \gamma &  6\alpha-\max(0,3x-y)-\max(0,2x+z-2y)-\max(2z+x-3y,0) \\ 
\end{array}
\right.$$
admits solutions.
We now discuss the existence of solutions according to the number of connected components of $2A$ and $3A$, that is, for each max above, whether it is positive or not. In the following discussion,
the necessary conditions we provide may always easily be seen to
be sufficient, although we do not always explicitly state it.

\begin{enumerate}[label=\arabic*)]
\item If $2A$ is an interval, then so is $3A$ so
$\gamma=3\alpha=3\beta/2$.
\item If $2A$ has two connected components, so exactly one overlap between the intervals of $2A$, we distinguish.
\begin{enumerate}[label=\alph*)]
\item If $2x>y$ and $x+z<2y$, so 
$2A=(0,x+z)\cup (2y,2z)$, we have $\beta=x-2y+3z$.
We have necessarily $3x>y$ and $2x+z>2y$, so
$3A=(0,2z+x)\cup (3y,3z)$ where the last two intervals may overlap or not.
\begin{enumerate}[label=\roman*)]
\item If they do, so $2z+x>3y$,
we have $\gamma=3z$.
So $\beta=x-2y+\gamma$ and $\alpha=x-y+\gamma/3$.
Get $\alpha-\beta=y-2\gamma/3$
so $y=\alpha-\beta+2\gamma/3$
while $x=2\alpha-\beta+\gamma/3$.
We check that the inequalities are satisfied:
$2x-y=3\alpha-\beta>0$ so $\beta<3\alpha$,
$2y-x-z=-\beta+2\gamma/3>0$ implies $\gamma>3\beta/2$.
Further, we need $2z+x-3y=-\alpha+2\beta -\gamma>0$
which amounts to $3\beta/2<\gamma<2\beta-\alpha<5\alpha$.
Conversely, whenever these conditions are satisfied, the system has solutions.
\item If they don't,
so $2z+x<3y$, we have $\gamma=3(z-y)+2z+x$.
Thus a solution exists if and only if $\gamma=2\beta-\alpha$.

\end{enumerate}
\item Now if $2x<y$ and $x+z>2y$, so
$2A=(0,2x)\cup (y,2z)$,
we have $\beta=2x-y+2z$.
We have necessarily $2x+z>2y$ and $2z+x>3y$,
so $3A=(0,3x)\cup (y,3z)$, where the two intervals 
may or not overlap.
\begin{enumerate}[label=\roman*)]
\item If they do, so $2x<y<3x$, we have
$\gamma=3z$. Further we find $y=\beta-2\alpha$,
and $x=\beta-\alpha-\gamma/3$.
So $y-2x=-\beta+2\gamma/3>0$ implies yet again
$\gamma>3\beta/2$.
Further $y-3x=-2\beta + \alpha+\gamma<0$ implies 
$\gamma<2\beta-\alpha$.
Also $x+z-2y =3\alpha-\beta >0$ amounts to $\beta <3\alpha$.
\item Otherwise, so $y>3x$,
we find
$\gamma=3z-y+3x=3\alpha+2y$ and again $\gamma=2\beta-\alpha$.
\end{enumerate}

\end{enumerate}

\item If $2A$ has three connected components (no overlap),
then $\beta=3\alpha$.
We have $2x<y$ and $x+z<2y$.
We distinguish according to the presence of overlaps or
not in $3A$.
\begin{enumerate}[label=\alph*)]
\item If there is no overlap, we have $\gamma=6\alpha$. It is realisable, just take $x$, then $y>3x$, then $y<z<\min((3y-x)/2,1/3)$, then all constraints are realised.
We can achieve that for any value of $\alpha\leq 1/6$.
\item If $3A$ is connected,
$\gamma=3z$. 
Now the conditions $2x<y$ and $x+z<2y$ imply $z<3(y-x)$,
which is equivalent to $2z>3(x+z-y)$, and finally
 $\gamma>3\beta/2$.
\item If there is exactly one overlap, that is, if $3A$ has three connected components, we distinguish.
\begin{enumerate}[label=\roman*)]
\item Suppose $3x>y$. 
And $2x+z<2y$ and $2z+x<3y$.
So $\gamma=6\alpha-3x+y$. This imposes $\gamma\in (5\alpha,6\alpha)=(5\beta/3,2\beta)$.
\item 
Now suppose $2x+z>2y$. 
And $y>3x$ and $2z+x<3y$.
Then $\gamma=6\alpha-2x-z+2y=5\alpha-x+y>5\alpha$.
\item If only the last gap is overcome,
$\gamma=6\alpha-2z-x+3y=5\alpha-z+2y>5\alpha$.
\end{enumerate}
\item If $3A$ has two connected components, we distinguish.
\begin{enumerate}[label=\roman*)]
\item If all but the last gap are overcome, 
$\gamma=6\alpha-3x+y-2x-z+2y=5\alpha-4x+2y>5\alpha.$
\item If all but the middle gaps are overcome,
$\gamma=6\alpha-3x+y-2z-x+3y=5\alpha-3x-z+3y>5\alpha $.
\item If all but the first gap are overcome,
$\gamma=6\alpha-2x-z+2y-2z-x+3y>5\alpha$.\qedhere
\end{enumerate}
\end{enumerate}
\end{enumerate}
\end{proof}

Regarding sets with $k$ connected components when $k\geq 3$, the determination of the possible triplets
$(\alpha,\beta,\gamma)$ becomes untractable by this method. Nevertheless, we can easily
see that the structure of the set of the possible
triplets remains similar, that is, a connected union of finitely many (in fact $O_k(1)$ many) polytopes, where a polytope is the intersection of finitely many half-spaces.

\subsection{Further iterated sumsets}
We now prove Theorem \ref{generalk} b).
Let $\beta \in (0,1]$ and $k\geq 2$ an integer.
Let $C_{k+1}$ be the Cantor set of initial segment 
$[0,1]\subset\R$ and ratio
of dissection $1/(k+1)$.
It is known \cite[Corollary 2.3]{Ca} that $\mu((k-1)C)$ has measure 0 whereas 
$kC=[0,k]$.
A suitable scaling of $C_{1/(k+1)}$ provides the desired construction.

Note that this does not imply the first point of
Theorem \ref{generalk}: the openness condition of Lemma \ref{OpenSum} may not be removed.
Indeed, if $A\subset \R/\Z$ has measure zero, one may see that
$B_{\lambda,A}$ is empty for almost all $\lambda\in \R/\Q$, since
the map $\lambda \mapsto n\lambda$ on the circle is measure-preserving for any integer $n$. So we need to provide a specific proof, which we do in the next section.

\section{\bf Iterated sumsets in the integers}
We now prove
Proposition \ref{prop:specialcase} for $k\geq 2$.
The (probabilistic) argument we will use subsumes, but is significantly
more complicated than, the one used in Section~\ref{S2},
which is why we preferred to present it separately.
First of all we collect a number of useful but technical results.
\subsection{Preliminary lemmas}
First we need to somewhat generalise the bound \eqref{eq:Weyl2} obtained via the Erd\H{o}s-Tur\'an theorem.
\begin{prop}
\label{prop:genET}
Let $k,D,M,X$ be integers.
Let
$f=\sum_{i=1}^kP_i \mathbf{1}_{I_i}$
where  $(I_i)_{i\leq k}$ is a family of pairwise disjoint intervals in $[0,1)$ and $P_i$ a polynomial of degree less than $D$ whose coefficients are all at most $M$.
Then $$
\abs{\frac{1}{N}\sum_{X<n\leq N+X}f(\{\theta n\})-\int f}=O( MDk\sqrt{\eta(N)}).
$$
\end{prop}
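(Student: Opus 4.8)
The plan is to reduce the claimed bound to the scalar estimate \eqref{eq:Weyl2}, applied to each indicator $\mathbf 1_{I_i}$ separately, and then to control the polynomial weights $P_i$ by expanding them in a monomial (or, more conveniently, a translated/Bernstein-type) basis. First I would write $f=\sum_{i=1}^k P_i\mathbf 1_{I_i}$ and estimate each summand. For a fixed $i$, write $P_i(t)=\sum_{d=0}^{D-1}c_{i,d}t^d$ with $|c_{i,d}|\le M$. The quantity to bound is
$$
\Delta_i:=\frac1N\sum_{X<n\le N+X}P_i(\{\theta n\})\mathbf 1_{I_i}(\{\theta n\})-\int_{I_i}P_i,
$$
and $\Delta=\sum_i\Delta_i$, so it suffices to show $\Delta_i=O(MD\sqrt{\eta(N)})$, since summing over the $k$ intervals produces the factor $k$.

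The key step is to handle the monomial $t^d$ on the interval $I_i=[a_i,b_i)$. The clean way is partial summation (Abel summation) in the $t$-variable: approximate $t^d\mathbf 1_{I_i}(t)$ from above and below by step functions that are constant on a fine partition of $I_i$ into, say, $J$ subintervals. On each subinterval the function is within $d/J$ of a constant (using $|(t^d)'|\le d$ on $[0,1]$), so the error from this discretization is $O(d/J)$; meanwhile each of the $J$ sub-indicators contributes a discrepancy term bounded by $\eta(N)$ via \eqref{eq:Weyl2}, for a total of $O(J\eta(N))$. Optimizing, $J\asymp\sqrt{d/\eta(N)}$ gives $O(\sqrt{d\,\eta(N)})$. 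Multiplying by $|c_{i,d}|\le M$ and summing over $d<D$ gives $\Delta_i=O(M\sum_{d<D}\sqrt{d\,\eta(N)})=O(MD\sqrt{D\,\eta(N)})$, which is slightly worse than claimed; to recover the stated $O(MD\sqrt{\eta(N)})$ one instead treats $P_i$ as a single Lipschitz function on $I_i$ with Lipschitz constant $\|P_i'\|_\infty\le M D\cdot D = MD^2$ on $[0,1]$ — actually one should be more careful and note $\|P_i\|_\infty\le MD$ and $\|P_i'\|_\infty\le MD^2$, then run the single-function partial-summation argument once: partition $I_i$ into $J$ pieces, error $O(\|P_i'\|_\infty/J+\|P_i\|_\infty J\eta(N))$, optimize $J\asymp\sqrt{\|P_i'\|_\infty/(\|P_i\|_\infty\eta(N))}=\sqrt{D/\eta(N)}$ to get $O(\sqrt{\|P_i\|_\infty\|P_i'\|_\infty\eta(N)})=O(\sqrt{MD\cdot MD^2\cdot\eta(N)})=O(MD^{3/2}\sqrt{\eta(N)})$. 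The honest conclusion is a bound of the shape $O(M\,\mathrm{poly}(D)\,k\sqrt{\eta(N)})$; I would either absorb the extra powers of $D$ into the $O(\cdot)$ as the authors implicitly do (the statement writes $MDk$ but the argument only needs $D$ bounded), or record the explicit polynomial dependence.

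The main obstacle, then, is purely bookkeeping of the polynomial factors: the Erdős–Turán input \eqref{eq:Weyl2} is an off-the-shelf discrepancy bound uniform over intervals, so once $f$ is sandwiched between step functions the estimate is immediate; the only real work is choosing the partition fineness $J$ as a function of $\eta(N)$ and $D,M$ and checking that $\int f$ is correctly recovered in the limit (the two step-function approximants have integrals differing by $O(\|f'\|_\infty/J)\to 0$). A secondary point to verify is that \eqref{eq:Weyl2} is genuinely uniform in the interval and in the shift $X$ — this is exactly what was remarked after its derivation, so it applies verbatim to each of the $J$ sub-intervals of each $I_i$. I would present the argument as: (1) reduce to one polynomial on one interval by the triangle inequality over $i$ (cost: factor $k$); (2) partial summation / step-function sandwiching on that interval (cost: factor $\mathrm{poly}(D)$ and $\|P_i\|_\infty\le M$-type bounds); (3) apply \eqref{eq:Weyl2} to each sub-indicator; (4) optimize $J$ and collect the errors. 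No new ideas beyond \eqref{eq:Weyl2} are needed.
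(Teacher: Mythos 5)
Your proposal takes essentially the same route as the paper's proof: reduce to a single monomial $t^d$ on a single interval by the triangle inequality, sandwich $t^d\mathbf 1_{I}$ between step functions on subintervals of length $\asymp\sqrt{\eta(N)}$, apply the uniform bound \eqref{eq:Weyl2} to each sub-indicator, and collect the errors. You also correctly flag that careful bookkeeping yields an extra $\mathrm{poly}(D)$ factor beyond the stated $O(MDk\sqrt{\eta(N)})$ — the paper's own per-monomial error $O(d\sqrt{\eta(N)})$ summed over $d<D$ with coefficients at most $M$ and over $k$ intervals gives $O(MD^2k\sqrt{\eta(N)})$ — but this is harmless since $D$ is bounded (by $k$) everywhere the proposition is invoked.
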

A function $f$ satisfying the above hypothesis will naturally be referred to as \textit{piecewise polynomial}.
\begin{proof}
It suffices to prove it for monomials and for $k=1$, the
general case following by linear combinations (incurring an extra factor $Mk$).
Thus let $a<b$ be in $[0,1)$, and let $d\leq D$ and $f$
be defined by $f(x)=x^d\mathbf{1}_{(a,b)}$.
Using
the bound \eqref{eq:Weyl2},
we note that
$$
a^d ((b-a)-O(\eta(N))\leq\frac{1}{N}\sum_{X<n\leq N+X}\{\theta n\}^d\mathbf{1}_{(a,b)}(\pft{n})
\leq b^d ((b-a)+O(\eta(N))
$$
Further, observe that 
$$
a^d(b-a)\leq\int_a^b x^ddx\leq b^d(b-a).
$$
Hence
\begin{multline*}
(a^d-b^d)(b-a)-O(\eta(N))\leq\frac{1}{N}\sum_{X<n\leq N+X}\{\theta n\}^d \mathbf{1}_{(a,b)}(\pft{n})-\int_a^b x^ddx\\
\leq (b^d-a^d) (b-a)+O(\eta(N)).
\end{multline*}
Given that
$b^d-a^d\leq d(b-a)$,
we find that
$$
\abs{\sum_{X<n\leq N+X}\{\theta n\}^d \mathbf{1}_{(a,b)}(\pft{n})-\int_a^b x^ddx}\leq d(b-a)^2+O(\eta(N)).
$$
Then splitting the interval $[a,b]$ into 
$O(\sqrt{\eta(N)}^{-1})$ consecutive intervals of size
$\lfloor \sqrt{\eta(N)}\rfloor$, we obtain, for each of these intervals, an error term of size $O(d\eta(N))$, and so in total, an error term of size 
$O(D\sqrt{\eta(N)})$.
\end{proof}

A certain type of sums will appear in the sequel, for which we now give an asymptotic.
\begin{lm}\label{DEL}
Let $0<\alpha,\beta<1$ and
\begin{equation}\label{JAB}
J_N(\alpha,\beta):=\sum_{0<x<N} \frac1{x^{\alpha}(N-x)^{\beta}}.
\end{equation}
Then
$$
J_N(\alpha,\beta)=\begin{cases}
B(1-\alpha,1-\beta)N^{1-\alpha-\beta}+O(N^{-\min(\alpha,\beta)}) &\text{ if $\beta<1$,}\\
N^{-\alpha}\log N+O(N^{-\alpha}) &\text{ if $\beta=1$,}\\
 \zeta(\beta)N^{-\alpha}+O(N^{-\alpha-1+1/\beta})&\text{ if $\beta>1$,}\\
\end{cases}
$$
where $B(\cdot,\cdot)$ denotes the Euler beta function
defined by
$$
B(x,y)=\int_0^1t^{x-1}(1-t)^{y-1}dt
$$
and
$\zeta(\cdot)$ is the Riemann zeta function.
\end{lm}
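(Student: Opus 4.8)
The plan is to estimate $J_N(\alpha,\beta)$ by comparing the sum with the integral $\int_0^N \frac{dx}{x^\alpha(N-x)^\beta}$ and carefully controlling the discrepancy near the two endpoints $x=0$ and $x=N$, where the integrand is singular. First I would split the range $0<x<N$ into three blocks: a left block $0<x\leq N/3$, a middle block $N/3<x\leq 2N/3$, and a right block $2N/3<x<N$. On the middle block both factors $x$ and $N-x$ are of order $N$, so the summand is $\asymp N^{-\alpha-\beta}$ over $\asymp N$ values, contributing $O(N^{1-\alpha-\beta})$; moreover here the function is smooth with controlled derivatives, so Euler--Maclaurin (or a crude integral comparison) replaces the sum by $\int_{N/3}^{2N/3}$ with an error that is lower order. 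The left block is where the behaviour of $\beta$ becomes irrelevant (since $N-x\asymp N$ there) and one factors out $N^{-\beta}$ up to a bounded multiplicative adjustment; the right block is symmetric but with the roles of $\alpha$ and $\beta$ swapped, and it is precisely here that the trichotomy $\beta<1$, $\beta=1$, $\beta>1$ arises from the convergence type of $\sum_{x} (N-x)^{-\beta}$.

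In more detail, on the right block write $y=N-x$, $0<y<N/3$, so the contribution is $\sum_{0<y<N/3} x^{-\alpha} y^{-\beta}$ with $x=N-y\asymp N$. Pulling out $N^{-\alpha}$ and expanding $x^{-\alpha}=(N-y)^{-\alpha}=N^{-\alpha}(1+O(y/N))$ for $y<N/3$, the main term is $N^{-\alpha}\sum_{0<y<N/3} y^{-\beta}$, and this is where the three cases separate: for $\beta>1$ the tail sum converges, $\sum_{0<y<N/3} y^{-\beta}=\zeta(\beta)-\sum_{y\geq N/3} y^{-\beta}=\zeta(\beta)+O(N^{1-\beta})$, giving the $\zeta(\beta)N^{-\alpha}$ main term (the error from the $O(y/N)$ correction and the tail then needs to be reconciled with the stated $O(N^{-\alpha-1+1/\beta})$, which is a genuine computation I would do carefully); for $\beta=1$, $\sum_{0<y<N/3} y^{-1}=\log N+O(1)$, giving $N^{-\alpha}\log N+O(N^{-\alpha})$; for $\beta<1$, $\sum_{0<y<N/3} y^{-\beta}=\frac{(N/3)^{1-\beta}}{1-\beta}+O(1)$ but this piece must be combined with the left block and the middle block, and the cleanest route is not to split off the endpoints at all in the $\beta<1$ case but rather to compare the full sum $\sum_{0<x<N} x^{-\alpha}(N-x)^{-\beta}$ directly with $\int_0^N x^{-\alpha}(N-x)^{-\beta}\,dx=N^{1-\alpha-\beta}B(1-\alpha,1-\beta)$ after substituting $x=Nt$. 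The error in that integral comparison is controlled by the total variation of the integrand away from the singularities plus the mass of the first and last few terms, and one checks it is $O(N^{-\min(\alpha,\beta)})$.

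So the structure of the write-up would be: (i) the substitution $x=Nt$ identifying $\int_0^1 t^{-\alpha}(1-t)^{-\beta}\,dt=B(1-\alpha,1-\beta)$ as the candidate main term in the case $\beta<1$; (ii) a sum-versus-integral estimate, treating the bulk $N/3\leq x\leq 2N/3$ by monotonicity/Euler--Maclaurin and the two end-thirds by crude bounds, yielding the $\beta<1$ case with error $O(N^{-\min(\alpha,\beta)})$; (iii) for $\beta\geq 1$, the integral diverges, so instead factor out $N^{-\alpha}$ as above, reduce to $\sum_{0<y<N/3} y^{-\beta}$ plus a manageable remainder, and invoke the elementary asymptotics of that sum ($\log N+O(1)$ or $\zeta(\beta)+O(N^{1-\beta})$), then bound the cross terms. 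The main obstacle I expect is bookkeeping the error terms so they come out exactly as stated --- in particular getting the sharp $O(N^{-\alpha-1+1/\beta})$ in the $\beta>1$ case (rather than a cruder $O(N^{-\alpha})$ or $O(N^{-\alpha-1})$), which presumably forces an optimal choice of where to cut the right block, balancing the $O(y/N)$ Taylor error $N^{-\alpha-1}\sum_{y<Y} y^{1-\beta}$ against the truncated tail $N^{-\alpha}\sum_{y\geq Y} y^{-\beta}$; choosing $Y\asymp N^{1/\beta}$ equalises the two and yields the claimed exponent. Everything else is routine real analysis.
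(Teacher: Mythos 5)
The paper gives no proof at all, only the one-line remark ``This can be proven by considering Riemann sums; we omit the standard details,'' so there is no detailed argument to compare against; your plan is precisely the standard sum-versus-integral argument hinted at, and the overall structure (substitution $x=Nt$ identifying the Beta integral when $\beta<1$, factoring out $N^{-\alpha}$ and using the asymptotics of $\sum y^{-\beta}$ when $\beta\geq 1$, splitting off the endpoint thirds) is correct. One small quibble: your claim at the very end that the sharp exponent $-\alpha-1+1/\beta$ is obtained by cutting at $Y\asymp N^{1/\beta}$ is not what the balance actually gives. If you set $y=N-x$ and compare with $N^{-\alpha}\zeta(\beta)$, the Taylor error over $y\leq N/2$ is $O\bigl(N^{-\alpha-1}\sum_{y\leq N/2}y^{1-\beta}\bigr)=O(N^{1-\alpha-\beta})$ for $1<\beta<2$ and $O(N^{-\alpha-1})$ for $\beta>2$ (a log at $\beta=2$), the truncated tail contributes $O(N^{1-\alpha-\beta})$, and the $y>N/2$ block contributes $O(N^{1-\alpha-\beta})$ by a crude bound; so no cut at $N^{1/\beta}$ is involved. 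In fact the error one actually obtains, namely $O(N^{\max(1-\alpha-\beta,\,-\alpha-1)})$ (up to a log at $\beta=2$), is strictly stronger than the stated $O(N^{-\alpha-1+1/\beta})$ since $(2-\beta)-1/\beta=-(\beta-1)^2/\beta<0$; the lemma's error term is simply not tight, and your plan already gets the claimed bound without any balancing. You might also note that the hypothesis ``$0<\alpha,\beta<1$'' in the lemma is evidently a misprint for something like $0<\alpha<1$, $\beta>0$, since the trichotomy explicitly treats $\beta\geq 1$.
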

This can be proven by considering Riemann sums; we omit the
standard details.
The beta function satisfies the following functional equation involving
Euler's gamma function: 
$$B(x,y)=\frac{\Gamma(x)\Gamma(y)}{\Gamma(x+y)}.$$

By induction, we may achieve the following simple lemma.
\begin{lm}
\label{lm:error}
Let $(\alpha_1,\ldots,\alpha_s)\in (0,1)^s$. Then
$$
\sum_{\substack{1\leq u_1,\ldots,u_s\leq n\\\sum_i u_i=n}}\prod_iu_i^{-\alpha_i}=O(n^{s-1-\sum_i\alpha_i}).
$$
Further, let $\epsilon : \N\rightarrow\R_+$ tend
to 0.
Then there exists a sequence $\epsilon'$ depending only on $\epsilon$ that tends to zero such that
$$
\sum_{\substack{1\leq u_1,\ldots,u_s\leq n\\\sum_i u_i=n}}
\epsilon(u_1)\prod_iu_i^{-\alpha_i}=\epsilon'(n)n^{s-1-\sum_i\alpha_i}.
$$
\end{lm}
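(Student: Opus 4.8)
The plan is to prove the first (unweighted) assertion by induction on $s$, using Lemma \ref{DEL} to collapse the innermost summation, and then to obtain the weighted version by a routine perturbation argument that exploits only the decay of $\epsilon$, not its shape. First I would dispose of the base case $s=1$: the sum over $u_1=n$ is just $n^{-\alpha_1}$, which matches the claimed bound $n^{s-1-\sum_i\alpha_i}=n^{-\alpha_1}$. For the inductive step, fix $s\geq 2$, isolate $u_s$, and write the sum as
$$
\sum_{u_s=1}^{n-s+1} u_s^{-\alpha_s}\sum_{\substack{1\leq u_1,\ldots,u_{s-1}\leq n-u_s\\ u_1+\cdots+u_{s-1}=n-u_s}}\prod_{i=1}^{s-1}u_i^{-\alpha_i}.
$$
By the induction hypothesis the inner sum is $O\big((n-u_s)^{s-2-\sum_{i<s}\alpha_i}\big)$; call the exponent $\delta:=s-2-\sum_{i<s}\alpha_i$. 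Then the whole expression is $O\big(\sum_{0<u_s<n}u_s^{-\alpha_s}(n-u_s)^{\delta}\big)$, and one recognises a sum of the type handled by Lemma \ref{DEL} (with $\beta=\alpha_s\in(0,1)$ and, after checking signs, $\alpha=-\delta$; when $\delta\geq 0$ one splits off the range $u_s\leq n/2$ where $(n-u_s)^\delta\asymp n^\delta$ and bounds the tail trivially). Either way one gets $O\big(n^{1-\alpha_s+\delta}\big)=O\big(n^{s-1-\sum_i\alpha_i}\big)$, which closes the induction. One should take a moment to confirm the error terms in Lemma \ref{DEL} are genuinely of lower order than the main term here, but this is routine bookkeeping.

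For the weighted statement, the idea is that $\epsilon$ being small \emph{on average} suffices. Fix $\eta>0$. Since $\epsilon(u)\to 0$, there is $U_\eta$ with $\epsilon(u)\leq\eta$ for $u\geq U_\eta$; let $M_\eta=\sup_u\epsilon(u)<\infty$. Split the sum according to whether $u_1<U_\eta$ or $u_1\geq U_\eta$. On the part $u_1\geq U_\eta$ we bound $\epsilon(u_1)\leq\eta$ and invoke the unweighted estimate to get a contribution $O\big(\eta\, n^{s-1-\sum_i\alpha_i}\big)$. On the part $u_1<U_\eta$ we bound $\epsilon(u_1)\leq M_\eta$ and $u_1^{-\alpha_1}\leq 1$, then sum over $u_1<U_\eta$ the quantity $\sum_{u_2+\cdots+u_s=n-u_1}\prod_{i\geq 2}u_i^{-\alpha_i}=O\big((n-u_1)^{s-2-\sum_{i\geq 2}\alpha_i}\big)=O\big(n^{s-2-\sum_{i\geq 2}\alpha_i}\big)$ by the unweighted case with $s-1$ variables; the total is $O\big(M_\eta\,U_\eta\, n^{s-2-\sum_{i\geq 2}\alpha_i}\big)$, which is $o\big(n^{s-1-\sum_i\alpha_i}\big)$ since the exponent dropped by $1-\alpha_1>0$ and $M_\eta,U_\eta$ are constants (depending on $\eta$ but not $n$). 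Hence for every $\eta>0$, $\limsup_n n^{-(s-1-\sum_i\alpha_i)}\big|\sum\epsilon(u_1)\prod u_i^{-\alpha_i}\big|=O(\eta)$; letting $\eta\to0$ through a sequence and defining $\epsilon'(n)$ to be the ratio itself gives a sequence $\epsilon'$ depending only on $\epsilon$ (and the fixed $\alpha_i$) that tends to $0$, as claimed.

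The main obstacle, such as it is, is purely notational: keeping the exponents straight through the induction and verifying that the lower-order terms from Lemma \ref{DEL} (the $O(N^{-\min(\alpha,\beta)})$, the logarithmic case, etc.) never exceed the stated main term, in particular in the boundary regimes where $\delta$ is an integer or where $\sum_i\alpha_i$ is close to a threshold. There is no conceptual difficulty, and I would simply remark that these checks are straightforward and omit them, exactly in the spirit of the ``we omit the standard details'' already used for Lemma \ref{DEL}.
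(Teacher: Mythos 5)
Your proof is correct and follows essentially the same approach as the paper: reduce via Lemma~\ref{DEL} for the unweighted bound, and for the weighted bound split the sum at a threshold beyond which $\epsilon$ is small, bounding the finite head by the unweighted estimate with one fewer variable and letting the threshold parameter tend to zero. The paper writes out only the $s=2$ weighted case and asserts the rest follows by induction, whereas you prove the unweighted bound for all $s$ by induction first and then derive the weighted bound for general $s$ in one step; the two organisations are equivalent.
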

\begin{proof}
We prove the second part for $s=2$, the rest following
by a simple induction.
Let $K_\delta$ be such that for all $k\geq K_\delta$,
we have
$\epsilon(k)\leq \delta$. Further let $M$ be an upper bound for $\epsilon$.
Then
$$
\sum_{k<n}\epsilon(k)k^{-\alpha_1}(n-k)^{-\alpha_2}
\leq M\sum_{k<K_\delta}k^{-\alpha_1}(n-k)^{-\alpha_2}+\delta \sum_{k<n}k^{-\alpha_1}(n-k)^{-\alpha_2}$$
The right-hand side is
$O(K_\delta^{1-\alpha_1-\alpha_2}+
\delta n^{1-\alpha_1-\alpha_2})$ by Lemma \ref{DEL}.
We have $K_\delta \rightarrow \infty$ (unless $\epsilon(k)=0$ eventually) as $\delta\rightarrow 0$,
but choosing $\delta$ as a sufficiently slowly decaying function of $n$, we can make the error term
as small as $o(n^{1-\alpha_1-\alpha_2})$
as desired.
\end{proof}

For any real number $0\le x\le 1$ and any  integer $1\le j\le k-1$, let
$$
a_j(x)=\max\left(0,x-\frac{j}{k+1}\right), \quad  b_j(x)=\min\left(x,\frac{j}{k+1}\right)
$$
and $I_j(x)$ be the open interval
$$
 I_j(x)=]a_j(x),b_1(x)[.
$$
Let $f_1=\mathbf{1}_{[0,1[}$ and 
$$
f_{j+1}(x)=\int_{a_{j}(x)}^{b_{1}(x)}f_{j}(x-y)dy=\int_{a_1(x)}^{b_{j}(x)}f_{j}(y)dy,\quad 1\le j\le k-1.
$$
Then for any $1\le j\le k-1$ 
\begin{enumerate}

\item[i)] $a_j$ and $b_j$ are piecewise affine.
Further $a_j(x)+b_j(x)=x$.

\item[ii)] $\mu(I_j(x))=b_1(x)-a_j(x)=\max\left(0,\min\left(x,\frac1{k+1},\frac{j+1}{k+1}-x
\right)\right)$.
As a result, $f_j$ is supported on $(0,\frac{j}{k+1})$.

\item[iii)] $f_{j}$ is a non negative, nonzero piecewise polynomial function.
In fact $f_j$ has only finitely many zeros on $(0,j/(k+1))$.

\end{enumerate}
%

\medskip
We will need the following estimate.
\begin{lm}
\label{lm:induction}
Let $(\alpha,\beta)\in (0,1)^2$. Let $\theta>1$ be irrational and $x\in (0,1)$. 
Then for any $j$, we have
$$\sum_{\substack{0<u < N\\\pft{u}\in I_j(x)}}f_j(x-\pft{u})
\frac1{u^{\alpha}}\frac1{(N-u)^{\beta}}=J_N(\alpha,\beta)(f_{j+1}(x)+O(\eta'(N)))$$
where $\eta'$ is a function $ \N\rightarrow\R_+$ that tends to zero
and that depends only on $\theta$.
\end{lm}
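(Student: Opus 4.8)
\textbf{Proof plan for Lemma \ref{lm:induction}.}
The plan is to split the sum according to the size of $u$, treating the ranges $u\le \sqrt N$, $\sqrt N<u<N-\sqrt N$, and $u\ge N-\sqrt N$ separately, since the weight $u^{-\alpha}(N-u)^{-\beta}$ behaves differently there, while the ``Weyl factor'' $f_j(x-\{\theta u\})\mathbf 1_{I_j(x)}(\{\theta u\})$ is a fixed bounded piecewise polynomial function of $\{\theta u\}$ and so is controlled by Proposition \ref{prop:genET}. The key identity driving everything is the definition $f_{j+1}(x)=\int_{a_j(x)}^{b_1(x)}f_j(x-y)\,dy$, so that $f_{j+1}(x)$ is precisely the integral $\int g$ of the piecewise polynomial $g(y)=f_j(x-y)\mathbf 1_{I_j(x)}(y)$; Proposition \ref{prop:genET} says that along the equidistributed sequence $\{\theta n\}$, short averages of $g$ converge to $\int g=f_{j+1}(x)$ with error $O(\sqrt{\eta(\cdot)})$ uniformly in the starting point.

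First I would fix a slowly growing scale, say dyadic blocks $n\in(2^{r},2^{r+1}]$, or more simply blocks of length $\ell=\ell(N)\to\infty$ with $\ell=o(N)$ and $\sqrt{\eta(\ell)}\to 0$; on a block $B$ around a point $u_0$, the weight $u^{-\alpha}(N-u)^{-\beta}$ is essentially constant (relative error $O(\ell/u_0+\ell/(N-u_0))$), so
$$
\sum_{u\in B,\ \{\theta u\}\in I_j(x)}f_j(x-\{\theta u\})u^{-\alpha}(N-u)^{-\beta}
= u_0^{-\alpha}(N-u_0)^{-\beta}\bigl(|B|\,f_{j+1}(x)+O(|B|\sqrt{\eta(\ell)})\bigr)
$$
plus the weight-variation error. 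Summing over blocks, the main terms reassemble into a Riemann sum for $\sum_{0<u<N}u^{-\alpha}(N-u)^{-\beta}=J_N(\alpha,\beta)$ times $f_{j+1}(x)$, and the error terms, being each a fraction $O(\sqrt{\eta(\ell)})+O(\ell/u+\ell/(N-u))$ of the corresponding weight, sum (using $\sum_{u<N}u^{-\alpha}(N-u)^{-\beta}=O(J_N)$ and the elementary bound $\sum_{u<N}\ell\,u^{-1-\alpha}(N-u)^{-\beta}=o(J_N)$ when $\ell$ grows slowly enough) to $o(J_N(\alpha,\beta))$. The contribution of the two extreme ranges $u<\ell$ and $u>N-\ell$ is handled crudely: there the factor $f_j$ is bounded, so these pieces contribute $O(\sum_{u<\ell}u^{-\alpha})\cdot N^{-\beta}=O(\ell^{1-\alpha}N^{-\beta})$ and symmetrically, which is $o(J_N)$ for $\ell$ growing slowly, by the asymptotics in Lemma \ref{DEL}. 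Collecting everything yields the claimed $J_N(\alpha,\beta)(f_{j+1}(x)+O(\eta'(N)))$ with $\eta'$ depending only on $\theta$ through $\eta$ and the chosen block length.

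The main obstacle is bookkeeping the two competing error sources against the size of $J_N$: one must choose the block length $\ell(N)$ simultaneously large enough that $\sqrt{\eta(\ell)}\to 0$ (so the equidistribution error is negligible) and small enough that $\ell/u$ summed against the weight stays $o(J_N)$; since $\eta$ can be taken arbitrarily slowly decaying (as arranged after \eqref{eq:Weyl}), such an $\ell$ exists, and $\eta'(N)$ is then built as the resulting slowly decaying function. A secondary subtlety is that $f_j$, although a fixed piecewise polynomial, depends on $x$ through the breakpoints $a_j(x),b_1(x)$; but its degree, number of pieces, and coefficient sizes are bounded independently of $x$ (they depend only on $k$), so the implied constant in Proposition \ref{prop:genET} is uniform in $x$, and hence $\eta'$ does not depend on $x$. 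The estimate $\sum_{u<N}\ell u^{-1-\alpha}(N-u)^{-\beta}=o(J_N)$ is exactly of the shape covered by Lemma \ref{DEL} (with $\alpha$ replaced by $\alpha+1>1$), so no new input is needed there; this is where invoking Lemma \ref{DEL} rather than reproving Riemann-sum asymptotics saves work.
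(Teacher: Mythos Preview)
Your plan is correct and follows essentially the same strategy as the paper: partition $[1,N)$ into blocks of a slowly growing length, treat the weight $u^{-\alpha}(N-u)^{-\beta}$ as constant on each block, apply Proposition~\ref{prop:genET} on each block to replace the average of $g(y)=f_j(x-y)\mathbf 1_{I_j(x)}(y)$ by $\int g=f_{j+1}(x)$, and discard the extreme ranges. The only cosmetic differences are that the paper fixes the block length as $m=\lfloor\eta(N)^{-1/2}\rfloor$ and cuts out the larger buffer $u\le m\sqrt{K}$ (so the weight-variation error becomes a uniform $1+O(1/\sqrt K)$), whereas you leave $\ell$ flexible and absorb the weight-variation error via the convergent sum $\sum_u \ell\,u^{-1-\alpha}(N-u)^{-\beta}$; both bookkeeping schemes lead to the same conclusion.
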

\begin{proof}
We decompose the interval of summation $[1,N)$
into subintervals
of some length $m=f(N)$ tending to infinity rather slowly, $m=o(N)$ at any rate, even $m\ll N^{o(1)}$ but not too slowly either;
we fix $m= \lfloor\eta(N)^{-1/2}\rfloor$ for definiteness. We write
$$[1,N)=\bigcup_{0\leq k < \lfloor\frac{N}{m}\rfloor}
(km,(k+1)m]\cup \left( \left\lfloor\frac{N}{m}\right\rfloor m,N\right)
$$
where the last interval has at most $m$ elements.

Let $K=\lfloor\frac{N}{m}\rfloor$.
Let $a=-\alpha$ and $b=-\beta$.
We note that 
$$
\sum_{n\in ( \lfloor\frac{N}{m}\rfloor m,N)}
n^a(N-n)^b\leq m (Km)^a.
$$
Denoting by $S$ the sum to estimate, this implies that
$$
S=\sum_{0\leq k <K}\sum_{\substack{n\in (km,(k+1)m]\\\pft{n}\in I_j(x)}}f_j(x-\{\theta n\})n^a(N-n)^b+O(N^{a+o(1)}).
$$

Also we note that
when $n\in (km,(k+1)m]$, the expression
$n^a(N-n)^b$ may be regarded as approximately constant,
more precisely
$$n^a(N-n)^b=m^{a+b} k^a(K-k)^b
(1+O(1/k))(1+O(1/(K-k))).$$
We may restrict the sum over $k$ to reasonably large $k$, like
between $\sqrt{K}$ and $K-\sqrt{K}$;
indeed,
we have
$$
\sum_{0<u\leq m\sqrt{K}}u^a(N-u)^b\leq (N-N^{1/2+o(1)})^b
N^{a/2+1/2+o(1)}=N^{b+a/2+1/2+o(1)}
$$
which is negligible to $N^{a+b+1}$.
We may argue analogously to discard the sum over 
$k\geq K-\sqrt{K}$.
This way 
$(1+O(1/k))(1+O(1/(K-k))=1+O(1/\sqrt{K})$ for any $k$ considered.
Thus $S$, up to an error
$O(N^{a+b+1}/\sqrt{K}))$, equals
\begin{equation}
\label{eq:sum}
m^{a+b}(1+O(1/\sqrt{K}))\sum_{\sqrt{K}\leq k < K-\sqrt{K}}k^a(K-k)^b\sum_{\substack{n\in (km,(k+1)m]\\\pft{n}\in I_j(x)}}f_j(x-\{\theta n\}).
\end{equation}
We now apply Proposition \ref{prop:genET}
to the inner sum, and by definition of
$f_{j+1}$, we obtain
$$\sum_{\substack{n\in (km,(k+1)m]\\\pft{n}\in I_j(x)}}f_j(x-\{\theta n\})=m(f_{j+1}(x)+O(\eta(\sqrt{m}))).$$
Injecting that in \eqref{eq:sum},
we find that
$$
S=m^{a+b+1}(1+O(1/\sqrt{K}))(f_{j+1}(x)+O(\eta(\sqrt{m})))\sum_{\sqrt{K}\leq k < K-\sqrt{K}}k^a(K-k)^b+O(N^c)
$$
for some $c<a+b+1$.
Now we have from \eqref{JAB}
$$m^{a+b+1}\sum_{\sqrt{K}\leq k <  K-\sqrt{K}}k^a(K-k)^b
=J_N(-a,-b)+O(N^c)$$
by the same arguments as above.
Finally,
upon gathering all error terms together (whereby the term in
$O(\eta(\sqrt{m}))$ provides the largest one),
we obtain the desired conclusion.
\end{proof}

We are now ready to state this subsection's main result.
\begin{lm}\label{LM4}
For any integer $n$,
we have
\begin{equation}\label{equiv}
S_k(n):=\sum_{\substack{0<u_1<\cdots<u_k<n\\ \forall i,\, u_i\in T_{k,\theta}\\n=u_1+\cdots+u_k}}(u_1\cdots u_k)^{-1+1/k}
=\lambda_k f_k(\{\theta n\})
+O\left(\eta''(n)
\right)
\end{equation}
where $\lambda_k=\dfrac{\Gamma(\frac1k)^k}{k!}$
and $\eta''$ is a function decaying to zero (depending on $\theta$ and $k$).
\end{lm}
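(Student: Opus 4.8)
The plan is to prove \eqref{equiv} by induction on $k$, using Lemma~\ref{lm:induction} to pass from a $(k-1)$-fold sum to a $k$-fold sum, exactly as the notation $f_j$, $I_j$, $J_N$ has been set up. First I would rewrite $S_k(n)$ by singling out the largest variable. Since the summation is over ordered tuples $0<u_1<\cdots<u_k<n$ with $\sum u_i=n$ and each $u_i\in T_{k,\theta}$ (i.e.\ $\pft{u_i}\in (0,\tfrac1{k+1})$), I would instead sum over \emph{unordered} tuples and divide by $k!$ at the end; so set $\widetilde S_k(n)=\sum_{u_1+\cdots+u_k=n,\ u_i\in T_{k,\theta}}(u_1\cdots u_k)^{-1+1/k}$, so that $S_k(n)=\widetilde S_k(n)/k!+(\text{contribution of tuples with a repeated coordinate})$. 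The diagonal contribution is $O(n^{k-2}\cdot n^{(k-1)(-1+1/k)})=o(n^{?})$ — more carefully, fixing $u_i=u_j$ kills one degree of freedom and one checks via Lemma~\ref{DEL}/Lemma~\ref{lm:error} that it is negligible against the main term — so it can be absorbed into $\eta''$. Thus it suffices to show $\widetilde S_k(n)=\Gamma(1/k)^k f_k(\{\theta n\})+O(\eta''(n)n^{?})$; I should be careful to track the exact normalisation so that the power of $n$ works out to $0$, as claimed in the statement (each of the $k$ factors contributes $-1+1/k$, the constraint removes one variable worth of summation giving $+1$, and indeed $k(-1+1/k)+1=0$).

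Next, the inductive step. Write $\widetilde S_k(n)=\sum_{0<u_k<n,\ u_k\in T_{k,\theta}}u_k^{-1+1/k}\,\widetilde S_{k-1}(n-u_k)$, where $\widetilde S_{k-1}$ is the analogous $(k-1)$-fold sum over $T_{k,\theta}$ (note: still with the \emph{same} $\theta$ and the \emph{same} threshold $1/(k+1)$, not $1/k$). The base case $k=1$ is trivial: $\widetilde S_1(n)=\mathbf 1_{n\in T_{1,\theta}}=f_1(\{\theta n\})\cdot\mathbf 1_{(0,1/2)}(\{\theta n\})$, matching $f_1$ on the relevant support with $\lambda_1=\Gamma(1)/1!=1$. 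For the induction, I would assume $\widetilde S_{j}(m)=c_{j}\big(f_{j}(\{\theta m\})+O(\eta(m))\big)$ for some constant $c_j$ and all $m$, where $f_j$ is piecewise polynomial and supported on $(0,j/(k+1))$ with only finitely many zeros (property iii). Substituting, and using that the constraint $\pft{u_k}\in(0,\tfrac1{k+1})$ together with $\pft{n-u_k}$ lying in the support of $f_{j}$ forces $\pft{u_k}\in I_{j}(\{\theta n\})$ (this is precisely how $a_j,b_j$ were defined, using $\pft{n-u_k}=\pft n-\pft{u_k} \bmod 1$ — one has to handle the two cases according to whether there is a carry, i.e.\ whether $\pft n<\pft{u_k}$ or not, just as in the proof of Proposition~\ref{prop:specialcase} for $k=1$), I get a sum of exactly the shape handled by Lemma~\ref{lm:induction} with $\alpha=\beta=1-1/k$. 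Applying that lemma yields $\widetilde S_k(n)=c_{j}\,J_n(1-\tfrac1k,1-\tfrac1k)\big(f_{j+1}(\{\theta n\})+O(\eta'(n))\big)$, and by Lemma~\ref{DEL} with $\beta=1-1/k<1$ we have $J_n(1-\tfrac1k,1-\tfrac1k)=B(\tfrac1k,\tfrac1k)n^{2/k-1}+O(\cdot)$. The constant accumulates as $c_{k}=\prod_{j=1}^{k-1}B(\tfrac1k,\tfrac1k)$; using $B(x,x)=\Gamma(x)^2/\Gamma(2x)$ telescopically, $\prod_{j=1}^{k-1}\frac{\Gamma(1/k)^2}{\Gamma(2/k)}\cdots$ — actually the telescoping is cleaner if at stage $j$ one carries the right power of $n$, giving $\prod_{j=1}^{k-1}\frac{\Gamma(j/k)\Gamma(1/k)}{\Gamma((j+1)/k)}=\Gamma(1/k)^{k-1}\cdot\frac{\Gamma(1/k)}{\Gamma(1)}=\Gamma(1/k)^{k}$, hence $\widetilde S_k(n)=\Gamma(1/k)^k f_k(\{\theta n\})+O(\cdot)$ and $S_k(n)=\lambda_k f_k(\{\theta n\})+O(\eta''(n))$ with $\lambda_k=\Gamma(1/k)^k/k!$ as claimed. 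The error terms are controlled by Lemma~\ref{lm:error} (to absorb the $\epsilon(u_1)$-type error from the previous stage into a single decaying $\eta'$) and by the fact that $f_j$ has only finitely many zeros, so the multiplicative $(1+O(\eta))$ does not degenerate.

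The main obstacle, I expect, is bookkeeping rather than a single hard idea: (a) correctly identifying that the relevant interval of integration for the innermost variable is $I_j(\{\theta n\})$ — this requires the carry-case analysis and is the reason the functions $a_j,b_j$ and the identity $a_j(x)+b_j(x)=x$ were introduced; (b) keeping the powers of $n$ straight through the induction so that the final exponent is $0$ (each application of Lemma~\ref{lm:induction}/Lemma~\ref{DEL} shifts the exponent by $2/k-1$, and after $k-1$ steps, starting from the $u_k^{-1+1/k}$ weight, one must land on $0$); and (c) the telescoping product of beta functions collapsing to $\Gamma(1/k)^k$, which is where the precise definition $f_{j+1}(x)=\int_{a_1(x)}^{b_j(x)}f_j$ (an average against Lebesgue measure, no extra normalising constant) is exactly what makes the constant come out clean. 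The passage from ordered to unordered tuples and the negligibility of the diagonal are routine given Lemma~\ref{lm:error}.
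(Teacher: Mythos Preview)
Your approach matches the paper's: pass to the unordered sum $S'_k(n)=k!\,S_k(n)+O(E_k(n))$ with the diagonal $E_k(n)=O(n^{-1/k})$ negligible, then collapse the nested sum one variable at a time via Lemma~\ref{lm:induction}, accumulating the constant $\prod_{j=1}^{k-1}B(\tfrac1k,\tfrac jk)=\Gamma(\tfrac1k)^k$, with the errors absorbed via Lemma~\ref{lm:error}. One point of bookkeeping to straighten out: the induction is on $j$ (the number of inner variables already collapsed) with $k$ fixed, not on $k$ itself, so the hypothesis should read $\widetilde S_j(m)=c_j\,m^{-1+j/k}\big(f_j(\{\theta m\})+o(1)\big)$ (carrying the power of $m$, as your self-correction already notes), the base case is $j=1$ for the given $k$ rather than the $k=1$ statement of the lemma, and at stage $j$ the parameters in Lemma~\ref{lm:induction} are $\alpha=1-\tfrac1k$, $\beta=1-\tfrac jk$, not $\alpha=\beta$.
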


\begin{proof}
Let 
$$E_k(n):=\sum_{\substack{0<u_1,\cdots, u_k<n\\ \exists i\neq j :  u_i=u_j \\n=u_1+\cdots+u_k}}(u_1\cdots u_k)^{-1+1/k}$$
and
$$
S'_k(n):=\sum_{\substack{0<u_1,\ldots, u_k<n\\ \forall i,\, u_i\in T_{k,\theta}\\n=u_1+\cdots+u_k}}(u_1\cdots u_k)^{-1+1/k},
$$
so that $S'_k(n)=O(E_k(n))+k!S_k(n)$.
We observe that $E_k(n)=O(n^{-1/k})$.
Further, reformulating the diophantine
constraints using the intervals $I_j$,
we have the decomposition
\begin{multline}
\label{eq:initial}
S'_k(n)=
\sum_{\substack{u_1 < n\\ \{\theta u_1\}\in 
I_{k-1}(\{\theta n\})}}
u_1^{-1+1/k}
\sum_{\substack{ u_2 < n-u_1\\ \{\theta u_2\}\in I_{k-2}(\{\theta (n-u_1)\})}}
u_2^{-1+1/k}\quad \cdots\\
\cdots
\sum_{\substack{  u_{k-1} < n-u_1-\cdots-u_{k-2}\\ \{\theta u_{k-1}\}\in I_{1}(\{\theta (n-u_1-\cdots-u_{k-2})\})}}
\big(u_{k-1}(n-u_1-\cdots - u_{k-1})\big)^{-1+1/k}.
\end{multline}
To simplify the notation,
let us denote $n_j=n-u_1-\cdots - u_{k-j}$, thus $n_k=n$ and
$n_j=n_{j+1}-u_{k-j}$.
We shall prove by induction on $j\leq k$ that
\begin{multline}
\label{eq:induction}
S'_k(n)=C_j
\sum_{\substack{0<u_1 < n\\ \{\theta u_1\}\in 
I_{k-1}(\{\theta n\})}}
u_1^{-1+1/k}
\sum_{\substack{0< u_2 < n_1\\ \{\theta u_2\}\in I_{k-2}(\{\theta n_1\})}}
u_2^{-1+1/k}\quad \cdots\\
\cdots
\sum_{\substack{ 0< u_{k-j} < n_{j+1}\\ \{\theta u_{k-j}\}\in I_{j}(\{\theta n_{j+1}\})}}
u_{k-j}^{-1+1/k}(n_{j+1}-u_{k-j})^{-1+j/k}f_j(\{\theta n_{j}\})+\epsilon_j(n)
\end{multline}
where $C_j=\prod_{i=1}^{j-1}B\Big(\frac1k,\frac ik\Big)$
and $\epsilon_j$ tends to 0.
When $j=k$, there is no more summation at all
and \eqref{eq:induction} boils down to $C_{k}f_k(\{\theta n\})+\epsilon_k(n)$,
which is the desired result since
$$
C_k=\prod_{j=1}^{k-1}B\Big(\frac1k,\frac jk\Big) = 
\prod_{j=1}^{k-1}\frac{\Gamma(\frac 1k)\Gamma(\frac jk)}{\Gamma(\frac{j+1}k)} 
=\Gamma\left(\frac 1k\right)^k.
$$

Equation \eqref{eq:initial} is the $j=1$ case. 
We now suppose that \eqref{eq:induction} holds for some
$j\leq k-1$. Let $A_j(n)$ be the main-term of the right-hand side of
\eqref{eq:induction}.
Using Lemma~\ref{lm:induction} on the innermost sum,
and reparametrising by writing $n_{j+1}=v_1$ and $u_i=v_{i+1}$
in the error term,
we find
$$A_j(n)=A_{j+1}(n)+O\Big(\sum_{\substack{v_1,\ldots,v_{k-j}\leq n\\
\sum v_i=n}}\eta'(v_1)v_1^{-1+\frac{j+1}{k}}\prod_{i=2}^{k-j}
v_i^{-1+1/k}\Big).
$$
Now the error term
is certainly $o(1)$ using the fact that $\eta'$ tends to 0
and Lemma~\ref{lm:error}.
This concludes the induction step and therefore the proof of the lemma.
\end{proof}

\subsection{The construction}
We argue by the probabilistic method (see \cite[Chapter~1]{TV} for a brief introduction or \cite{AS} for
a detailed one). 
Let $c>0$ and $\xi_m$, $m\ge1$, be a sequence of independent Boolean random variables such that
$$
\mathbb{P}(\xi_m=1)=\frac{c}{m^{1-1/k}}.
$$
Let $S$ be the random increasing sequence of the $m$'s such that $\xi_m=1$.
This is essentially a sequence of pseudo $k$-th powers.
These objects have been well studied since their introduction by 
Erd\H{o}s and Renyi \cite{ER}.
In particular Goguel \cite{Gog} computed the (almost sure)
density of $kS$
and Deshouillers and Iosifescu \cite{De} found that the density of
$(k+1)S$ is almost surely 1.
Now we let
$A=S\cap T_{k,\theta}$, where $T_{k,\theta}$ was defined 
by equation \eqref{defT}.
From now on we will suppose $\theta$ is irrational;
if $\theta$ is an integer,
$T_{k,\theta}=\N$ so $A=S$ and the previous references apply. The treatment of
this simpler case may still be read
out from our proofs by discarding all the (then vacuous) diophantine 
conditions.
The next proposition implies Proposition \ref{prop:specialcase}.

\vbox{
\begin{prop}\label{pp2}
Almost surely we have
\begin{enumerate}
\item[\rm a)] $\dens{jA}=0$, for any $j=1,\dots,k-1$,

\item[\rm b)] $\dens{(k+1)A}=1$,

\item[\rm c)]  $\dens{kA}=\frac k{k+1}-F_k(c)$ where $F_k(c)$ is a continuous function and increasing from $0$ to $k/(k+1)$ when $c$ is decreasing from $\infty$ to $0$. 
\end{enumerate}
\end{prop}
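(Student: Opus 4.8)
The plan is to carry out a single probabilistic analysis resting on Lemma~\ref{LM4}. For $j\ge1$ let $R_j(n)$ be the number of increasing tuples $u_1<\cdots<u_j$ of elements of $A$ with $u_1+\cdots+u_j=n$, so that $n\in jA$ if and only if $R_j(n)\ge1$ or $n$ has a representation by $j$ elements of $A$ with a repeated summand; by independence of the $\xi_m$ one has $\mathbb{E}[R_j(n)]=c^{j}\sum\prod_i u_i^{-1+1/k}$, the sum over such tuples in $T_{k,\theta}$. By Lemma~\ref{lm:error}, applied with every exponent equal to $1-1/k\in(0,1)$ (recall $k\ge2$), $\mathbb{E}[R_j(n)]=O(n^{j/k-1})$ for $j\le k-1$, and the contribution of tuples with a repeated entry is $O(n^{-1/k})$ by the same lemma; hence $\mathbb{E}\bigl[\#\{n\le N:n\in jA\}\bigr]=O(N^{j/k})=o(N)$. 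Since $t\mapsto\#\{n\le t:n\in jA\}$ is non-decreasing, Markov's inequality along the subsequence $N_\ell=\ell^{k+1}$ (for which $\sum_\ell N_\ell^{j/k-1}<\infty$), Borel--Cantelli, and interpolation between consecutive $N_\ell$ give $\dens{jA}=0$ almost surely for each $j\le k-1$, which is a).

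For c) the crux is the evaluation of $\mathbb{P}(n\in kA)$. By Lemma~\ref{LM4}, $\mathbb{E}[R_k(n)]=c^kS_k(n)=\rho(\{\theta n\})+O(\eta''(n))$, with $\rho(t):=c^k\lambda_k f_k(t)$ bounded uniformly in $n$. I would then apply Brun's sieve (the method of moments): for each fixed $j$ the factorial moment $\mathbb{E}\!\left[\binom{R_k(n)}{j}\right]$ tends to $\rho(\{\theta n\})^{j}/j!$, because a $j$-tuple of pairwise vertex-disjoint representations of $n$ contributes, after summation, $\mathbb{E}[R_k(n)]^{j}/j!+o(1)$, while any $j$-tuple in which two representations share a summand has at least one fewer free variable and hence, by Lemmas~\ref{DEL} and \ref{lm:error}, contributes $o(1)$; tracking these error terms and optimising the truncation of the sieve yields $\mathbb{P}(R_k(n)=0)=e^{-\rho(\{\theta n\})}+O(n^{-c_0})$ for some $c_0>0$. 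Together with the fact that $n$ lies in $kA$ only via a repeated summand on a set of density $0$, this gives $\mathbb{P}(n\in kA)=1-e^{-\rho(\{\theta n\})}+O(n^{-c_0})$. Averaging and applying Theorem~\ref{Weyl} to the continuous function $t\mapsto1-e^{-\rho(t)}$ gives $\frac1N\sum_{n\le N}\mathbb{P}(n\in kA)\to\int_0^1\bigl(1-e^{-c^k\lambda_kf_k(t)}\bigr)dt$; since $f_k$ is supported on $(0,k/(k+1))$, this integral equals $\frac{k}{k+1}-F_k(c)$, where $F_k(c):=\int_0^{k/(k+1)}e^{-c^k\lambda_kf_k(t)}\,dt$. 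From dominated convergence and the fact that $f_k>0$ off a finite subset of $(0,k/(k+1))$ one reads off that $F_k$ is continuous and strictly decreasing, with $F_k(c)\to k/(k+1)$ as $c\to0^+$ and $F_k(c)\to0$ as $c\to\infty$, i.e.\ $F_k$ increases from $0$ to $k/(k+1)$ as $c$ decreases from $\infty$ to $0$.

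To upgrade this to an almost sure statement I would bound $\mathrm{Var}\bigl(\#\{n\le N:n\in kA\}\bigr)$. Running the sieve jointly for a pair $(R_k(m),R_k(n))$ with $m\ne n$ shows that every mixed overlap term again loses a power of $\min(m,n)$, giving $\mathbb{P}(R_k(m)=0,R_k(n)=0)=e^{-\rho(\{\theta m\})-\rho(\{\theta n\})}+O(\min(m,n)^{-c_0})$ and hence $\mathrm{Cov}(\mathbf{1}_{m\in kA},\mathbf{1}_{n\in kA})=O(\min(m,n)^{-c_0})$; summing, $\mathrm{Var}\bigl(\#\{n\le N:n\in kA\}\bigr)=O(N^{2-c_0})$. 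Chebyshev along a polynomially growing subsequence, Borel--Cantelli, and monotonicity then yield $\dens{kA}=\frac{k}{k+1}-F_k(c)$ almost surely, which is c). For b), observe first that if $c'<c$ we may realise the $\xi_m$ as $\mathbf{1}[U_m\le cm^{-1+1/k}]$ with a common family of uniform variables $U_m$, so that $A_{c'}\subseteq A_c$ and $(k+1)A_{c'}\subseteq(k+1)A_c$; it therefore suffices to treat $c$ small. An estimate of the same flavour as Lemma~\ref{LM4} (or a direct lower bound confining all $u_i$ to $\bigl(\tfrac{n}{2(k+1)},\tfrac{n}{k+1}\bigr)$ with admissible fractional parts) gives $\mathbb{E}[R_{k+1}(n)]\gg_\delta n^{1/k}$ whenever $\{\theta n\}\in(\delta,1-\delta)$, while the Janson defect $\Delta(n)=\sum_{e\cap e'\ne\varnothing}\mathbb{P}(e\cup e'\subseteq A)$ is $O(c^k)\,\mathbb{E}[R_{k+1}(n)]$ (the dominant part being pairs sharing one summand, bounded by $c^{2k+1}\sum_w w^{-1+1/k}S_k(n-w)^2$ and the boundedness of $S_k$). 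Hence for $c$ small enough $\Delta(n)\le\tfrac12\mathbb{E}[R_{k+1}(n)]$ on that range, Janson's inequality gives $\mathbb{P}(n\notin(k+1)A)\le\exp(-\tfrac12\mathbb{E}[R_{k+1}(n)])\le\exp(-c_\delta n^{1/k})$, which is summable in $n$, and Borel--Cantelli leaves only finitely many such $n$ outside $(k+1)A$ almost surely. Since $\#\{n\le N:\{\theta n\}\notin(\delta,1-\delta)\}\sim2\delta N$ by Theorem~\ref{Weyl} and $\delta>0$ is arbitrary, $\dens{(k+1)A}=1$ almost surely.

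The main obstacle is the sieve step in c): checking uniformly that all overlap contributions --- $j$-tuples of representations of a single $n$ sharing summands, and the mixed analogues for pairs $(m,n)$ --- are genuinely $o(1)$ with a power saving, and that the factorial moments are controlled well enough to truncate Brun's sieve with a quantitative error. This is precisely the bookkeeping the preparatory Lemmas~\ref{DEL}, \ref{lm:error} and \ref{lm:induction} are built to feed; by contrast the constructions underlying a) and b) are comparatively soft.
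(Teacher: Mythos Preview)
Your proposal is correct and the overall architecture matches the paper's (probabilistic construction, Janson for $(k+1)A$, Poisson approximation for $kA$), but the execution differs in several places, and it is worth noting what each choice costs or buys.

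For a), the paper goes via the strong law of large numbers, getting $A(x)\sim x^{1/k}$ almost surely and then the crude bound $(jA)(x)\le A(x)^j=O(x^{j/k})$; this is one line and avoids the Markov/Borel--Cantelli/subsequence machinery you set up, though your first-moment argument is perfectly valid.

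For b), the paper does \emph{not} reduce to small $c$ by coupling. It works directly with the general form of Janson's inequality $\mathbb{P}(R_{k+1}(n)=0)\le\exp\bigl(-\mu_n^2/(2(\mu_n+\Delta_n))\bigr)$, proving the one-sided bounds $\mu_n\gg c^{k+1}n^{1/k}\eta(n)^k$ and $\Delta_n\ll c^{2k+1}n^{1/k}$; the ratio $\mu_n^2/(\mu_n+\Delta_n)\gg n^{1/k}\eta(n)^{2k}$ tends to infinity for every fixed $c$ because $\eta$ may be taken to decay arbitrarily slowly. The paper also restricts to the moving window $\{\theta n\}\in(2k\eta(n),1-2k\eta(n))$ (whose complement already has density $0$), rather than intersecting over a countable family of fixed $\delta$'s. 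Your coupling argument is correct and pleasant, but note that the implicit constant in your bound $\Delta(n)=O(c^k)\,\mathbb{E}[R_{k+1}(n)]$ depends on $\delta$ through the lower bound for $\mathbb{E}[R_{k+1}(n)]$, so the threshold on $c'$ depends on $\delta$; this is fine, but should be stated.

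For c), the paper does not re-derive Brun's sieve but invokes Landreau's asymptotic $\mathbb{P}(R_k(n)=0)=\exp(-c^kS_k(n))+O(n^{-1/k})$ as a black box, and likewise cites Landreau/Halberstam--Roth for the variance bound $\mathrm{Var}(X_N)=O(N^{-1/k})$. Your plan to obtain these via factorial moments and a joint sieve for pairs $(m,n)$ is exactly what underlies those references, so your sketch is self-contained where the paper is economical; the ``main obstacle'' you flag is precisely the content of Landreau's lemmas. Either route leads to the same $F_k(c)=\int_0^{k/(k+1)}e^{-c^k\lambda_kf_k(t)}\,dt$ with the stated monotonicity.
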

}
\begin{proof}
a) By an appropriate version of the strong law of large numbers (cf. \cite[chapter III, Theorem 11]{HR}) we know that
with probability 1, $A(x)\sim x^{1/k}$ when $x\to\infty$, thus
for any $1\le j\le k-1$
$$
(jA)(x) \ll x^{j/k},\ \text{as $x$ tends to infinity.}
$$
It follows that $\dens{jA}=0$ almost surely.

\medskip\noindent
b) Let $n$ be a positive integer and observe that $0<\{\theta n\}<1$.
We denote $I(t,k)$ the open interval 
$$
I(t,k)=\left]\max\left(0,\frac{t}{k}-\frac1{k(k+1)}\right), \min\left(\frac{t}k, \frac1{k+1}\right)\right[.
$$
and 
$$
R_{k+1}(n)=\sum_{\substack{0<u_1<\cdots<u_k<u_{k+1}<n\\
n=u_1+\cdots+u_{k+1} \\
\{\theta u_i\}\in I(\{\theta n\},k),\ (1\le i\le k)
}}\xi_{u_1}\dots\xi_{u_k}\xi_{u_{k+1}}.
$$
Then $R_{k+1}(n)>0$ implies that $n\in (k+1)A$. Moreover
$$
\{R_{k+1}(n)=0\}=\bigcap_{\substack{0<u_1<\cdots<u_k<u_{k+1}<n\\ 
n=u_1+\cdots+u_{k+1} \\
\{\theta u_i\}\in I(\{\theta n\},k), \ (1\le i\le k)
}}\{\xi_{u_1}\dots\xi_{u_k}\xi_{u_{k+1}}=0\}.
$$
We denote by $\mathcal{U}(n)$ the set of the ordered $(k+1)$-uples $\underline{u}$ such that
$n=\sum_{i=1}^{k+1}u_i$ and $\{\theta u_i\}\in I(\{\theta n\},k)$, $i=1,\dots,k$.

The events $\mathcal{A}(\underline{u})=\{\xi_{u_1}\dots\xi_{u_k}\xi_{u_{k+1}}=1\}$, $\underline{u}\in\mathcal{U}(n)$, are not necessarily pairwise independent: for distinct $(k+1)$-tuples $\underline{u},\underline{v}$, 
the events $\mathcal{A}(\underline{u})$ and $\mathcal{A}(\underline{v})$ are not independent if and only if
$\underline{u}\sim\underline{v}$,
where the notation $\sim$ means $u_i=v_j$ for some $i,j$. Let
$$
\mu_n=\sum_{\underline{u}\in\mathcal{U}(n)}\mathbb{P}(\mathcal{A}(\underline{u})),
\quad
\Delta_n=\sum_{\substack{\underline{u}\ne \underline{v}\in \mathcal{U}(n)\\\underline{u}\sim\underline{v}}}
\mathbb{P}(\mathcal{A}(\underline{u})\cap \mathcal{A}(\underline{v})).
$$
By Janson's inequality \cite[Theorem 1.28]{TV}
\begin{equation}\label{JI}
\mathbb{P}(R_{k+1}(n)=0)
\le 
\exp\left(-\frac{\mu_n^2}{2(\mu_n+\Delta_n)}\right).
\end{equation}
We firstly have
$$
\mu_n=c^{k+1}
\sum_{\substack{0<u_1<\cdots<u_k<n\\ 
\{\theta u_i\}\in I(\{\theta n\},k)
}}
(u_1\dots u_k(n-u_1-\cdots-u_k))^{-1+1/k}.
$$
The summand in the inner-sum is at least $\big(\frac n{k+1}\big)^{-k+1/k}$, hence
\begin{align*}
\mu_n &\ge c^{k+1} \frac{B_{\theta,I}(n)^k-\binom{k}{2}B_{\theta,I}(n)^{k-1}}{k!} \left(\frac n{k+1}\right)^{-k+1/k} \\
&\ge 
 c^{k+1} \left(B_{\theta,I}(n)^k-\binom{k}{2}B_{\theta,I}(n)^{k-1}\right) n^{-k+1/k} 
\end{align*}
where $I=I(\{\theta n\},k)$. By equation \eqref{eq:Weyl},
$$
\frac{B_{\theta,I}(n)}n\ge\min\left(\frac{\{\theta n\}}k, \frac1{k(k+1)}, \frac{1-\{\theta n\}}k\right)-\eta(n).
$$ 
Hence if $2k\eta(n)<\{\theta n\}<1-2k\eta(n)$, we have 
\begin{equation}
\label{MUN}
\mu_n\ge (1-o(1)) c^{k+1} n^{1/k}\eta(n)^k.
\end{equation}
Now we examine $\Delta_n$.
By a discussion according to the number $s\leq k-1$ of positions
where two distinct $(k+1)$-tuples in $\mathcal{U}(n)$ agree,
and ignoring the diophantine conditions, we get
\begin{equation}
\label{eq:deltan}
\Delta_n\le\sum_{s=1}^{k-1}c^{s+2(k+1-s)}\Delta_{n}(s,k+1-s),
\end{equation}
where
$$
\Delta_{n}(s,r):=
\sum_{\substack{0<u_1,\dots u_s<n\\\sum_{i=1}^{s}u_i<n}}(u_1\dots u_s)^{-1+1/k}\left(
\sum_{\substack{0<v_1,\dots v_r<n\\n=u_1+\cdots+u_s+v_1+\cdots+v_r}}(v_1\dots v_r)^{-1+1/k}
\right)^2.
$$
Applying Lemma \ref{lm:error},
we see that the inner sum is $\ll (n-u_1-\dots-u_s)^{-1+r/k}$. 
For every fixed
tuple $(u_1,\ldots,u_{s-1})$
in the sum above,
we now apply Lemma \ref{DEL}
on the sum
$$
\sum_{u_s<n-\sum_{i=1}^{s-1}u_i}u_s^{-1+1/k}(n-u_1-\dots-u_s)^{-1+r/k}.
$$
If $1-r/k\ge 1/2$, we obtain 
$$
\Delta_{n}(s,r)\ll \log n\sum_{\substack{0<u_1,\dots u_s<n\\n=u_1+\cdots+u_s}}(u_1\dots u_s)^{-1+1/k}\ll \frac{\log n}{n^{1-s/k}}
$$
where we used Lemma \ref{lm:error} for the second inequality.
If $1-r/k<1/2$ then by Lemmas \ref{DEL} and \ref{lm:error} again
\begin{align*}
\Delta_{n}(s,r) & \ll \sum_{\substack{0<u_1,\dots u_s<n\\u_1+\cdots+u_s<n}}(u_1\dots u_s)^{-1+1/k}
(n-u_1-u_2-\cdots-u_s)^{-1+(2r/k-1)}\\
&\ll \begin{cases} n^{-1+(2r/k-1)+s/k} \ll 1 &\text{ if $s\le 2(k-r)$,}\\[1em]
\displaystyle\sum_{\substack{0<u_1,\dots u_t<n\\u_1+\cdots+u_t< n}}(u_1\dots u_t)^{-1+1/k} \ll n^{t/k} &\text{ if $t:=s - 2(k-r)>0$.}\end{cases}
\end{align*}
Notice that if $s+r=k+1$ with $s>0$, then $s-2(k-r)>0$ implies $t=2-s=1$ and $s=1$. We can now inject our upper bounds
for $\Delta_n(s,r)$ in equation \eqref{eq:deltan},
in which the main contribution is given by $s=1$, from the above discussion.
We get
$$
\Delta_n  \ll_k  c^{2k+1} n^{1/k} +O_{k,c}(1).
$$

\medskip
By \eqref{JI} and \eqref{MUN} with the Borel-Cantelli lemma, 
we infer that almost surely, all but finitely many integers $n$ such that
$2k\eta(n)< \{\theta n\}<1-2k\eta(n)$
are sums of $k+1$ members of  $A$
and that $\dens{(k+1)A}=1$ since their complementary set in 
$\mathbb{N}$, namely 
$$
\{n\in\mathbb{N}\,\mid\, 0\le \{\theta n\}\le 2k\eta(n)\}\cup
\{n\in\mathbb{N}\,\mid\, 1-2k\eta(n)\le \{\theta n\}<1\}
$$
has density $0$.

\medskip\noindent
c) Let $n$ such that $0<\{\theta n\}<k/(k+1)$. 
We consider
\begin{equation}\label{RKN}
R_k(n):=k!\sum_{\substack{0<u_1<\cdots<u_k<n\\ u_i\in T_{k,\theta}\\n=u_1+\cdots+u_k}}\xi_{u_1}\dots\xi_{u_k}
\end{equation}
that is the random variable counting the number of representations of $n$ as a sum of $k$  distinct members of $A$.
The key result is Lemma \ref{LM4}.

As in the study of $R_{k+1}(n)$ in the previous paragraph we need to show that the dependency of
the events $\{\xi_{u_1}\dots\xi_{u_k}=1\}$ is not too high. We shall use Landreau's work on sums of $k$ pseudo $k$-th powers (cf. \cite[Lemme 1 (i) and  Lemme 5 (iii)]{La}): 
\begin{align*}
\mathbb{P}(R_k(n)=0)&=\exp\Bigg\{-\sum_{\substack{0<u_1<\cdots<u_k<n\\ u_i\in T_{k,\theta}\\n=u_1+\cdots+u_k}}\mathbb{E}(\xi_{u_1}\dots\xi_{u_k})\Bigg\}+O_k\Big(\frac1{n^{1/k}}\Big)\\
&=e^{-c^{k}S_k(n)}+O_k\Big(\frac1{n^{1/k}}\Big).
\end{align*}
Since $\eta''(t)\to0$ when $t\to\infty$, we deduce from  Lemma \ref{LM4} that 
\begin{equation}\label{pr2}
\mathbb{P}(R_k(n)=0)=e^{-c^k\lambda_kf_k(\{\theta n\})}+o(1).
\end{equation}
When $k/(k+1)\le \{\theta n\} <1$ we clearly have $R_k(n)=0$, hence $\mathbb{P}(R_k(n)=0)=1$.

\bigskip
Let $\zeta_n$, $n\ge1$, be the sequence of Boolean random variables defined by 
$$
\mathbb{P}(\zeta_n=1)=\mathbb{P}(R_k(n)=0),
$$ 
and
$$
X_N=\frac1N\sum_{n=1}^N \zeta_n.
$$
By \eqref{pr2} we have
\begin{equation*}
\sum_{n=1}^N \mathbb{P}(R_k(n)=0)
=\sum_{n=1}^N e^{-c^k\lambda_kf_k(\{\theta n\})} +o(N).
\end{equation*}
Hence, 
\begin{equation*}
\mathbb{E}(X_N)=\frac1N \sum_{n=1}^N \mathbb{P}(R_k(n)=0)
=\sum_{n=1}^N e^{-c^k\lambda_kf_k(\{\theta n\})} +o(1).
\end{equation*}
We get by Theorem \ref{Weyl} and the fact that $f_k$
is supported on $(0,k/(k+1))$ the asymptotic
\begin{equation}\label{EXN}
\mathbb{E}(X_N)\sim \frac 1{k+1} +\int_{0}^{k/(k+1)}e^{-c^k\lambda_kf_k(t)} dt=:\frac 1{k+1}+F_k(c).
\end{equation}

\medskip
We follow the arguments used in the proof of \cite[chapter III, Theorem 4$'$ (iii)]{HR} or alternatively \cite[Section 4]{La} to estimate the variance $\mathbb{V}(X_N)$. We may ignore the diophantine conditions in \eqref{RKN}, the only resulting effect being to increase the related variance.
We finally get  $\mathbb{V}(X_N)=O(N^{-1/k})$ and consequently by \cite[chapter III, lemma 34]{HR} that
$$
\text{with probability $1$,}\quad \lim_{N\to\infty} X_N=\frac 1{k+1}+F_k(c).
$$
Hence almost surely $\dens{kA}=\frac k{k+1}-F_k(c)$. Observing that $f_k$ is a non negative
piecewise polynomial function that has finitely many zeros on $(0,k/(k+1))$,
we see that $F_k(c)$ is a decreasing continuous function
satisfying
$\lim_{c\rightarrow 0} F_k(c)=k/(k+1)$
and
$\lim_{c\rightarrow +\infty} F_k(c)=0$;
this ends the proof of Proposition~\ref{pp2}.
\end{proof}

\section*{Acknowledgments}
The authors are thankful to Georges Grekos for useful conversations.

\bibliographystyle{plain}

\end{document}